\newtheorem{theorem}{Theorem}[section]
\newtheorem{lemma}[theorem]{Lemma}
\newtheorem{corollary}[theorem]{Corollary}
\newtheorem{proposition}[theorem]{Proposition}
 \theoremstyle{definition}
 \newtheorem{remark}[theorem]{Remark}
 \newtheorem{example}[theorem]{Example}
\numberwithin{equation}{section}
\newcommand {\K}{\mathbb{K}} 
\newcommand {\Z}{\mathbb{Z}} 
\newcommand {\Q}{\mathbb{Q}} 
\newcommand {\C}{\mathbb{C}} 
\DeclareMathOperator{\CA}{CA}
\DeclareMathOperator{\LCA}{LCA}
\DeclareMathOperator{\End}{End}
\DeclareMathOperator{\Mat}{Mat}
\DeclareMathOperator{\Id}{Id}
\DeclareMathOperator{\supp}{supp}
\begin{document}
\title[Stable finiteness of monoid algebras]{Stable finiteness of monoid algebras and surjunctivity}
\author[T.Ceccherini-Silberstein]{Tullio Ceccherini-Silberstein}
\address{Universit\`a del Sannio, I-82100 Benevento, Italy}
\address{Istituto Nazionale di Alta Matematica ``Francesco Severi'', I-00185 Rome, Italy}
\email{tullio.cs@sbai.uniroma1.it}
\author[M.Coornaert]{Michel Coornaert}
\address{Universit\'e de Strasbourg, CNRS, IRMA UMR 7501, F-67000 Strasbourg, France}
\email{michel.coornaert@math.unistra.fr}
\author[X.K.Phung]{Xuan Kien Phung}
\address{D\'epartement d'Informatique et de Recherche Op\'erationnelle, Universit\'e de Montr\'eal, Montr\'eal, Qu\'ebec, H3T 1J4, Canada}
\address{D\'epartement de Math\'ematiques et de Statistique, Universit\'e de Montr\'eal, Montr\'eal, Qu\'ebec, H3T 1J4, Canada}
\email{phungxuankien1@gmail.com}
\subjclass[2020]{16S36, 20M25, 20M35,  03C98, 37B15, 68Q80}
\keywords{monoid, Surjunctive monoid, cellular automaton, monoid algebra, stably finite ring, first-order model theory} 
\begin{abstract}
A monoid $M$ is said to be surjunctive if every injective cellular automaton with finite alphabet over $M$ is surjective.
We show that monoid algebras of surjunctive monoids are stably finite.
In other words, given any field $K$ and any surjunctive monoid $M$,
every one-sided invertible square matrix with entries in the monoid algebra $K[M]$ is two-sided invertible. 
Our proof uses first-order model theory.
\end{abstract}
\date{\today}
\maketitle

\setcounter{tocdepth}{1}
\tableofcontents

\section{Introduction}

A \emph{monoid} is a semigroup with $1$, that is, a set equipped with an associative binary operation that admits an identity element.
One says that a monoid $M$ is \emph{directly finite}  if
$ab = 1$ implies $ba = 1$ for all $a,b \in M$.
This amounts to saying that every one-sided invertible element in $M$ is invertible.
All finite monoids, all commutative monoids, all unit-regular monoids, all one-sided cancellative monoids and in particular all groups   are  directly finite.
On the other hand, the \emph{bicyclic monoid}, i.e., the monoid given by the presentation $B \coloneqq  \langle p,q : pq = 1 \rangle$,
is not directly finite since $pq = 1 \not= qp$.
Actually, a monoid is directly finite if and only if it contains no submonoid isomorphic to the bicyclic monoid
(see e.g.~\cite[Proposition~2.3]{csc-surj-monoids}). 
\par
Let $R$ be a ring (all rings considered in this paper are assumed to be associative with a multiplicative identity element  $1$).
One says that $R$ is \emph{directly finite}  if
the multiplicative monoid underlying $R$ is directly finite.
This amounts to saying   that $R$ is Hopfian as a left (resp.~right) module over itself
(recall that a module $\Lambda$ is called  \emph{Hopfian} if every surjective endomorphism of $\Lambda$ is injective). 
The ring $R$ is said to be \emph{stably finite} if the ring $\Mat_d(R)$ of $d \times d$ square matrices with entries in $R$ is directly finite for every integer $d \geq 1$.
A ring $R$ is stably finite if and only if every finitely generated free left (resp.~right) $R$-module is Hopfian.
All finite rings, all commutative rings, all fields, all division rings, all one-sided Noetherian rings, all one-sided Artinian rings,
all unit-regular rings, and all finite-dimensional algebras over fields are stably finite
(see~\cite{shepherdson}, \cite[Section~1.B]{lam-modules-rings}, \cite{goodearl-vnr-rings}, \cite[Chapter~8]{ca-and-groups-springer}, \cite[Chapter~8]{csc-exos}).
On the other hand, the endomorphism ring $R \coloneqq \End_K(V)$ of an infinite-dimensional vector space $V$ over a field $K$ is not directly finite.
The class of directly (resp.~stably) finite rings is closed under taking subrings, direct products, direct sums, direct limits, and inverse limits.
Any stably finite ring is directly finite  since $\Mat_1(R) = R$ for any ring $R$.
The converse is false in general:  there exist rings that are directly finite but not stably finite
(see~\cite{jacobson-1950}, \cite{shepherdson}, \cite{cohn-remarks-ibn}, \cite[Exercise~1.18]{lam-emr}).
In fact, for any integer $d \geq 1$ and any field $K$, one can construct a $K$-algebra $R$ such that
$\Mat_d(R)$ is directly finite but $\Mat_{d + 1}(R)$ is not (see \cite[Theorem~7.2]{cohn-remarks-ibn}).
\par
Given a field $K$ and a monoid $M$, the \emph{monoid algebra} $K[M]$ is  
 the  $K$-algebra  with vector space basis $M$
with the multiplicative operation obtained by extending $K$-linearly the monoid operation on $M$.
\par
Let $M$ be a monoid and let $A$ be a set.
Consider the set  $A^M$ consisting of all maps $c \colon M \to A$.
For $S \subset M$ and $c \in A^M$, denote by $c|_S$ the restriction of $c$ to $S$, i.e., the element $c|_S \in A^S$ given by
$c|_S(s) \coloneqq c(s)$ for all $s \in S$.
For $m \in M$, let $R_m \colon M \to M$ denote the right multiplication by $m$, i.e., the map 
given by $R_m(x) \coloneqq x m$ for all $x \in M$.
One says that a map $\tau \colon A^M \to A^M$ is a \emph{cellular automaton} over $(M,A)$ 
if there exists a finite subset $S \subset M$ and a map $\mu \colon A^S \to A$ 
such that
\begin{equation}
\label{e:cell-aut}
\tau(c)(m) = \mu\left((c \circ R_m)\vert_S\right)
\end{equation}
for all $c \in A^M$ and $m \in M$.
\par
A monoid $M$ is called \emph{surjunctive} if, for any finite set $A$, every injective  cellular automaton over $(M,A)$   is surjective.
In the particular case when $M$ is a group, this  definition of surjunctivity is equivalent to the one given by
Gottschalk~\cite{gottschalk} (see also~\cite{gromov-esav},  \cite[Chapter~3]{ca-and-groups-springer}, \cite{csc-surj-monoids}).
In~\cite{csc-surj-monoids}, it is shown that all finite monoids, all finitely generated commutative monoids, all cancellative commutative monoids, all residually finite monoids, all finitely generated linear monoids, 
and all cancellative one-sided amenable monoids are surjunctive.
Every surjunctive monoid is directly finite
\cite[Theorem~5.5]{csc-surj-monoids}.
Thus, every  monoid containing a submonoid isomorphic to the bicyclic monoid is non-surjunctive.
In particular, the bicyclic monoid itself is non-surjunctive.
By contrast, no example of a non-surjunctive group has been found up to now.
The conjecture that every group is surjunctive is known as \emph{Gottschalk's surjunctivity conjecture}.
\par
The goal of the present paper is to establish the following result.

\begin{theorem}
\label{t:main}
Let $M$ be a surjunctive monoid and let $K$ be a field. 
Then the monoid algebra $K[M]$ is stably finite.
\end{theorem}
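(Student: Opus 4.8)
The plan is to strip the general coefficient field down to a \emph{finite} one by a model-theoretic specialization argument, and then to apply surjunctivity directly to linear cellular automata over a finite alphabet. Recall that ``$K[M]$ stably finite'' means that the matrix ring $\Mat_d(K[M])$ is directly finite for every $d \geq 1$; so I would fix $d \geq 1$ and suppose, towards a contradiction, that $A, B \in \Mat_d(K[M])$ satisfy $AB = I_d$ but $BA \neq I_d$.

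\emph{Step 1: reduction to a finite field.} Only finitely many elements of $K$ occur as coefficients of the (finitely supported) entries of $A$ and $B$; let $R_0 \subseteq K$ be the subring they generate, a finitely generated $\Z$-algebra and an integral domain. Then $A, B \in \Mat_d(R_0[M])$, and since $R_0[M] \hookrightarrow K[M]$ the relations $AB = I_d$ and $BA \neq I_d$ persist over $R_0$. Pick an entry of the nonzero matrix $BA - I_d \in \Mat_d(R_0[M])$ that is nonzero, and a coefficient $r \in R_0 \setminus \{0\}$ occurring in it. As $r$ is nonzero in the field $K$, it is not nilpotent, so $R_0[1/r]$ is a nonzero finitely generated $\Z$-algebra; by a general form of the Nullstellensatz it has a maximal ideal with finite residue field $\F$, and composing $R_0 \to R_0[1/r] \to \F$ yields a ring homomorphism $\psi \colon R_0 \to \F$ with $\psi(r) \neq 0$. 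Applying the induced homomorphism $\Mat_d(R_0[M]) \to \Mat_d(\F[M])$ to $A$ and $B$ produces $\bar A, \bar B \in \Mat_d(\F[M])$ with $\bar A \bar B = I_d$ but, since $\psi(r) \neq 0$, $\bar B \bar A \neq I_d$. Hence it is enough to prove that $\Mat_d(\F[M])$ is directly finite whenever $\F$ is a finite field.

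\emph{Step 2: the finite-field case via cellular automata.} For a finite field $\F$, I would write each $X \in \Mat_d(\F[M])$ as a finite sum $X = \sum_{s \in S} X_s\, s$ with $S \subseteq M$ finite and $X_s \in \Mat_d(\F)$, and attach to it the map $\tau_X \colon (\F^d)^M \to (\F^d)^M$, $\tau_X(c)(m) = \sum_{s \in S} c(sm)\, X_s$ (row vectors multiplied on the right by matrices). By the definition of cellular automaton given above (with $R_m(x) = xm$), $\tau_X$ is a cellular automaton over $(M, \F^d)$ with memory set $S$. A short computation shows that $X \mapsto \tau_X$ is injective, that $\tau_{I_d} = \mathrm{id}$, and that $\tau_{XY} = \tau_Y \circ \tau_X$. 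Then from $\bar A \bar B = I_d$ I get $\tau_{\bar B} \circ \tau_{\bar A} = \tau_{\bar A \bar B} = \mathrm{id}$, so $\tau_{\bar A}$ is an injective cellular automaton over the finite alphabet $\F^d$; surjunctivity of $M$ makes it surjective, hence bijective. Consequently $\tau_{\bar B} = \tau_{\bar A}^{-1}$, so $\tau_{\bar B \bar A} = \tau_{\bar A} \circ \tau_{\bar B} = \mathrm{id} = \tau_{I_d}$, and injectivity of $X \mapsto \tau_X$ gives $\bar B \bar A = I_d$ --- the desired contradiction.

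The cellular-automaton dictionary of Step 2, and the functoriality of $\Mat_d(-[M])$ in the coefficient ring, are classical and routine. The main obstacle is Step 1: one has to observe that the failure of direct finiteness is witnessed by a finite amount of data, hence descends to a finitely generated subring of $K$, and --- crucially --- that this subring can be specialized onto a \emph{finite} field without destroying the witnessing inequality $BA \neq I_d$. That last point is exactly the ``localize away from $r$, then reduce modulo a maximal ideal'' device, resting on the Nullstellensatz fact that finitely generated $\Z$-algebras have finite residue fields at their maximal ideals.
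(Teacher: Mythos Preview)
Your proof is correct, and Step~2 matches the paper's argument essentially verbatim (this is the content of Theorem~\ref{t:rep-d-dim-lca} and Theorem~\ref{t:lin-surj-d-dir-fin}). The genuine difference lies in Step~1. The paper transfers direct finiteness from finite fields to an arbitrary field $K$ by first-order model theory: it expresses the failure of direct finiteness (with supports contained in a fixed finite $S \subset M$) as a sentence $\psi_S$ in the language of rings, then climbs from finite fields to $\overline{\F_p}$ by a direct-limit argument, to arbitrary algebraically closed fields of positive characteristic via the first Lefschetz principle, to algebraically closed fields of characteristic $0$ via the second Lefschetz principle, and finally to arbitrary $K$ by passing to $\overline{K}$. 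You instead descend directly: the witnessing matrices live over a finitely generated $\Z$-subalgebra $R_0 \subset K$, and you specialize $R_0$ to a finite field via the Nullstellensatz for finitely generated $\Z$-algebras, localizing at the obstruction $r$ to ensure $BA \neq I_d$ survives. Your route is shorter and avoids model theory entirely, at the cost of not isolating the intermediate statement (Theorem~\ref{t:sf-KM-elem-equiv}) that elementary equivalent coefficient fields yield the same answer; the paper's route, by contrast, makes that invariance explicit and fulfils the abstract's promise of a model-theoretic proof.
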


Given a field $K$, any monoid $M$ embeds into the multiplicative monoid $K[M]$ by definition.
Consequently, $K[M]$ is not directly finite  as soon as   $M$ is not directly finite.
In particular, the monoid algebra $K[B]$ of the bicyclic monoid $B$ or, more generally, the monoid algebra $K[M]$, where $M$ is a monoid containing a submonoid isomorphic to $B$,  is not directly finite whatever the field $K$.
\par
A monoid algebra over a group is called a \emph{group algebra}.
One of the celebrated conjectures attributed to Kaplansky about the algebraic structure of group algebras is that every group algebra is stably finite.
This conjecture is known as \emph{Kaplansky's stable finiteness conjecture}.
Kaplansky~\cite[p.~122]{Kap2}, \cite[Problem~23]{Kap1} proved this conjecture  when the ground field $K$ is of characteristic $0$,
but  the case when $K$ is of positive characteristic remains open (see~\cite{steinberg2022stable} for an extension of Kaplansky's result to certain classes of monoids). 
The fact that surjunctive groups satisfy Kaplansky's stable finiteness conjecture is a particular case of Theorem~\ref{t:main}
that  was first established by the third author in~\cite[Theorem~B]{phung-pams-2023}  using techniques from algebraic geometry.
Direct finiteness of group algebras of surjunctive groups was proved by Capraro and Lupini in~\cite[p.~10]{capraro-lupini}  
 and their stable finiteness  can be then deduced by applying a result of Dykema and Juschenko~\cite[Theorem~2.2]{dykema-juschenko}
(see~\cite[Remark~4.3]{cscp-model}).
Another proof of stable finiteness of group algebras of surjunctive groups,
based on completeness of the first-order model theory of algebraically closed fields,
can be found in~\cite{cscp-model}.
The proof of Theorem~\ref{t:main} that we present in this paper follows the lines of the one given in~\cite{cscp-model}.
\par
The paper is organized as follows.
In Section~\ref{s:background}, we introduce some notation and collect some background material on
configuration spaces, cellular automata, and model theory. 
In Section~\ref{s:lca}, we investigate  linear cellular automata over monoids
and extend some of the results presented in~\cite[Chapter~8]{ca-and-groups-springer} for groups.
We show in particular (cf.~Theorem~\ref{t:rep-d-dim-lca}) that, given a monoid $M$, a field $K$, and an integer $d \geq 1$, 
the $K$-algebra of all linear cellular automata $\tau \colon (K^d)^M \to (K^d)^M$ is canonically anti-isomorphic to $\Mat_d(K[M])$. 
The proof of Theorem~\ref{t:main} is given in Section~\ref{s:proof-main-result}.
The final section contains a list of some open problems.

\section{Background material}
\label{s:background}

\subsection{Sets and maps}
Given sets $A$ and $B$, 
we denote by $B^A$ the set consisting of all maps $f \colon A \to B$.
Given sets $A$, $B$,  $C$, and maps $f \in B^A$ and $g \in C^B$, we denote by $g \circ f$ their \emph{composite}, i.e., the map $g \circ f \in C^A$ defined by $(g \circ f)(a) = g(f(a))$ for all $a \in A$.
If $C \subset A$ and $f \in B^A$,
we denote by $f|_C$ the \emph{restriction} of $f$ to $C$, i.e., the element $f|_C \in B^C$ defined by
$f|_C(c) \coloneqq f(c)$ for all $c \in C$.
When $A$ is a finite set, we write $|A|$ for its cardinality.

\subsection{Monoids}
Recall that a \emph{monoid} is a set equipped with a binary associative operation that admits an identity element. 
Let $M$ be a monoid.
Given an element $m \in M$, we denote by $L_m$ and $R_m$   the left and right multiplication by $m$, that is,
the maps $L_m \colon M \to M$ and $R_m \colon M \to M$ defined by $L_m(m') \coloneqq  mm'$ and $R_m(m') \coloneqq  m'm$, respectively,  for all $m' \in M$.
An element $m \in M$  is called \emph{left-cancellable} (resp.~\emph{right-cancellable}) 
if the map $L_m$ (resp.~$R_m$) is injective.
One says that an element $m \in M$ is cancellable if it is both left-cancellable and right-cancellable.
The monoid $M$ is called \emph{left-cancellative} (resp.~\emph{right-cancellative}, resp.~\emph{cancellative}) if every element in $M$ is left-cancellable  (resp.~right-cancellable, 
resp.~cancellable).
An element $m \in M$ is called
\emph{right-invertible} (resp.~\emph{left-invertible}) if  the map  $L_m$ (resp.~$R_m$)
is surjective.
This amounts to saying that there exists an element $m' \in M$ such that $m m' = 1_M$
(resp.~$m' m = 1_M$).
Such an element $m'$ is then called a \emph{right-inverse}
(resp.~\emph{left-inverse}) of $m$.
An element  $m \in M$ is called \emph{invertible} if it is both right-invertible  and left-invertible.
This amounts to saying that there exists an element $m' \in M$ such that $m m' = m' m = 1_M$.
The element $m'$ is then unique and is called the \emph{inverse} of $m$. 
It is both the unique right-inverse and the unique left-inverse of $m$.
    A \emph{group} is a monoid in which every element is invertible.
\par
If $M$ and $M'$ are monoids, a \emph{monoid homomorphism} from $M$ into $M'$ is a map $f \colon M \to M'$ such that 
$f(1_M) = 1_{M'}$ and $f(m_1 m_2) = f(m_1)f(m_2)$ for all $m_1,m_2 \in M$.
\par
A \emph{submonoid} of a monoid $M$ is a subset $N \subset M$ such that $1_M \in N$ and $m m' \in N$ for all $m, m' \in N$.
If $N \subset M$ is a submonoid, then $N$ inherits from $M$ a monoid structure obtained by restricting to $N$ the monoid operation on $M$.
The inclusion map $N \to M$ is then a   monoid homomorphism.

\subsection{Monoid actions}
An \emph{action} of a monoid $M$ on a set $X$ is a map $M \times X \to X$, $(m,x) \mapsto m x$,  such that
$1_M x = x$ and $m_1 (m_2 x) = (m_1 m_2) x$ for all $x \in X$ and $m_1,m_2 \in X$.
Observe that the map $M \times M \to M$, given by $(m,x) \mapsto x m$ for all $(m,x) \in M \times M$, is an action of $M$ on itself.
\par
A set equipped with an action of $M$ is called an $M$-\emph{set}.
Suppose that $X$ and $X'$ are  $M$-sets.
One says that a map $\varphi \colon X \to X'$ is \emph{equivariant} if it satisfies
$\varphi(mx) = m\varphi(x)$ for all $m \in M$ and $x \in X$.

\subsection{Configuration spaces}
Let $M$ be a monoid, called the \emph{universe}, and let $A$ be a set, called the \emph{alphabet}.
Consider the set $A^M$ consisting of all maps $c \colon M \to A$.
An element $c \in A^M$ is called a \emph{configuration} over $(M,A)$.
\par
The map $M \times A^M \to A^M$, defined by $(m,c) \mapsto  c \circ R_m$ for all $(m,c) \in M \times A^M$, is an action of $M$ on $A^M$.
This action is called the $M$-\emph{shift}, or simply the \emph{shift}, on $A^M$.
\par
We equip $A^M$  with its \emph{prodiscrete uniform structure}, i.e., with  the product uniform structure on $A^M = \prod_{m \in M} A$ obtained by taking the discrete uniform structure on each factor $A$ of $A^M$
(see~\cite[Section~3]{csc-surj-monoids}).
The \emph{prodiscrete topology} on $A^M$ is the product topology on $A^M$ obtained by taking the discrete topology on each factor $A$ of $A^M$.
The prodiscrete topology on $A^M$ is also the topology associated with its prodiscrete uniform structure.
The $M$-shift action of $M$ on $A^M$ is uniformly continuous and hence continuous.  
In other words, for every $m \in M$, the map $A^M \to A^M$ given by $c \mapsto c \circ R_m$ is uniformly continuous and hence continuous.

\subsection{Cellular automata over monoids}
Let $M$ be a monoid and let $A$ be a set.
A map $\tau \colon A^M \to A^M$ is called a \emph{cellular automaton} over $(M,A)$ if
there exists a finite subset $S \subset M$ and a map $\mu \colon A^S \to A$
such that
\begin{equation}
\label{e:def-ca}
\tau(c)(m) = \mu((c \circ R_m)|_S)
\end{equation}
for all $c \in A^M$ and $m \in M$ (cf.~\cite{csc-surj-monoids}).
\par
If a finite subset $S \subset M$ and a map $\mu \colon A^S \to A$ are as in~\eqref{e:def-ca}, one says that $S $ is a \emph{memory set} for $\tau$
and that $\mu$ is the associated \emph{local defining map}. Note that $\mu$ is indeed entirely determined by $\tau$ and $S$,
since the restriction map $A^M \to A^S$ is surjective and $\mu(c|_S) = \mu((c \circ R_{1_M})|_S) = \tau(c)(1_M)$
for all $c \in A^M$ by~\eqref{e:def-ca}.

\begin{example}
\label{exa:left-m-is-ca}
Fix an element $m_0 \in M$.
Then the map $\tau  \colon A^M \to A^M$, defined by
  $\tau(c) \coloneqq c \circ L_{m_0}$ for all
$c \in A^M$,     is a cellular automaton.
The set $S \coloneqq  \{m_0\}$ is a memory set for $\tau$ and the identity map $\mu \coloneqq \Id_A \colon A^S =  A \to A$ is the associated local defining map.
Note that if $m_0 \coloneqq  1_M$ then   $\tau $ is the identity map $\Id_{A^M}$ on $A^M$.
\end{example}

If $\tau \colon A^M \to A^M$ is a cellular automaton, then a memory set for $\tau$ is not unique in general.
Indeed, if $S$ is a memory set for $\tau$ then any finite subset $S' \subset M$ such that $S \subset S'$ is also a memory set for $\tau$.
However, any cellular automaton $\tau \colon A^M \to A^M$ admits a unique memory set with minimal cardinality.
This memory set is called the \emph{minimal memory set} of $\tau$ and a finite subset of $M$ is a memory set for $\tau$ if and only if it contains the minimal memory set of $\tau$
(cf.~\cite[Proposition~4.13]{csc-surj-monoids}).  
\par
The \emph{generalized Curtis-Hedlund-Lyndon theorem}  \cite[Theorem~4.6]{csc-surj-monoids} asserts that a map $\tau \colon A^M \to A^M$ is a cellular automaton if and only if
it is uniformly continuous (with respect to the prodiscrete uniform structure on $A^M$) and equivariant (with respect to the shift action of $M$ on $A^M$).
In the particular case when $A$ is a finite set, the space $A^M$ is compact Hausdorff by Tychonoff's theorem so that the generalized Curtis-Hedlund-Lyndon theorem reduces to the assertion that 
$\tau$ is a cellular automaton if and only if it is continuous (with respect to the prodiscrete topology on $A^M$) and equivariant (with respect to the shift action of $M$ on $A^M$).
As the composite of two uniformly continuous (resp.\ equivariant) maps is uniformly continuous (resp.\ equivariant), one immediately deduces 
from the generalized Curtis-Hedlund-Lyndon theorem that if $\tau \colon A^M \to A^M$ and
$\sigma \colon A^M \to A^M$ are two cellular automata, then their composite $\sigma \circ \tau \colon A^M \to A^M$
is also a cellular automaton. As a consequence, the set $\CA(M,A)$ consisting
of all cellular automata over $(M,A)$ is a monoid whose identity element is  $\Id_{A^M}$ (cf.~Example~\ref{exa:left-m-is-ca}).

\subsection{Surjunctive monoids}
Recall that a monoid $M$ is said to be \emph{surjunctive} if, for any finite set $A$,
every injective cellular automaton $\tau \colon A^M \to A^M$ is surjective.

\begin{lemma}
\label{l:char-inj-ca}
Let $M$ be a monoid and let $A$ be a finite set.
Suppose that $\tau \colon A^M \to A^M$ is an injective cellular automaton.
Then there exists a cellular automaton $\sigma \colon A^M \to A^M$ such that
$\sigma \circ \tau = \Id_{A^M}$.
In other words, $\tau$ is left-invertible in the monoid $\CA(M,A)$.
\end{lemma}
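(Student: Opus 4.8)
The plan is to exploit compactness of the alphabet. Since $A$ is finite, the configuration space $A^M$ is compact Hausdorff for the prodiscrete topology by Tychonoff's theorem, and $\tau$ is continuous (and equivariant) by the generalized Curtis--Hedlund--Lyndon theorem. An injective continuous map from a compact space to a Hausdorff space is a homeomorphism onto its image, so $X \coloneqq \tau(A^M)$ is a compact, hence closed, subset of $A^M$, and the inverse bijection $\psi \coloneqq \tau^{-1} \colon X \to A^M$ is continuous. Moreover, equivariance of $\tau$ gives $\tau(c \circ R_m) = \tau(c) \circ R_m$, so $X$ is invariant under the shift (i.e.\ $x \circ R_m \in X$ for all $x \in X$ and $m \in M$) and $\psi$ is equivariant as well: $\psi(x \circ R_m) = \psi(x) \circ R_m$. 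The task thus reduces to extending the equivariant continuous map $\psi$ defined on the subshift $X$ to a cellular automaton $\sigma \colon A^M \to A^M$, for then $\sigma \circ \tau = \psi \circ \tau = \Id_{A^M}$.

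To build $\sigma$, I would first extract a finite ``memory set'' for $\psi$. The map $X \to A$, $x \mapsto \psi(x)(1_M)$, is continuous with $A$ discrete, so each $x \in X$ has a neighbourhood in $X$ of the form $\{y \in X : y|_{F_x} = x|_{F_x}\}$, with $F_x \subset M$ finite, on which this map is constant. By compactness of $X$, finitely many such neighbourhoods cover $X$; letting $S$ be the union of the corresponding finite sets $F_x$, we get that $\psi(x)(1_M)$ depends only on $x|_S$. Hence there is a well-defined map $\nu \colon \{x|_S : x \in X\} \to A$ with $\psi(x)(1_M) = \nu(x|_S)$ for all $x \in X$; extend $\nu$ arbitrarily to $\bar\nu \colon A^S \to A$ and let $\sigma \in \CA(M,A)$ be the cellular automaton with memory set $S$ and local defining map $\bar\nu$.

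It then remains to verify $\sigma|_X = \psi$. Fix $x \in X$ and $m \in M$. Since $X$ is shift-invariant, $x \circ R_m \in X$, so $\sigma(x)(m) = \bar\nu\big((x \circ R_m)|_S\big) = \nu\big((x \circ R_m)|_S\big) = \psi(x \circ R_m)(1_M)$, and by equivariance of $\psi$ this equals $(\psi(x) \circ R_m)(1_M) = \psi(x)(m)$. Therefore $\sigma(x) = \psi(x)$ for every $x \in X$; in particular $\sigma(\tau(c)) = \psi(\tau(c)) = c$ for all $c \in A^M$, which is exactly $\sigma \circ \tau = \Id_{A^M}$, so $\tau$ is left-invertible in $\CA(M,A)$. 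The main obstacle is the middle step: one must use compactness of $X$ (which is where finiteness of $A$ enters) to obtain a single finite $S$ controlling $\psi$, and one must be careful that $X$ is genuinely a subshift so that the local rule $\bar\nu$, when iterated by the shift, reproduces $\psi$ on all of $X$ rather than just at $1_M$.
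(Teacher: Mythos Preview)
Your proof is correct and follows essentially the same approach as the paper's: both use compactness of $A^M$ to see that $\tau$ is a homeomorphism onto its image $X$, observe that the inverse $\psi$ is equivariant, extract a finite memory set $S$ from continuity of $x \mapsto \psi(x)(1_M)$, and extend the resulting local rule to all of $A^S$ to define $\sigma$. Your write-up is in fact slightly more careful than the paper's in two places: you spell out the compactness covering argument yielding the single finite $S$, and you explicitly verify that $X$ is shift-invariant before invoking equivariance of $\psi$ at arbitrary $m$ (the paper leaves both of these implicit).
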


\begin{proof}
Let us set $Y \coloneqq \tau(A^M)$. Consider the bijective map $\varphi \colon A^M \to Y$ given by $\varphi(c) \coloneqq \tau(c) $ for all $c \in A^M$.
It follows from the compactness of $A^M$ and the continuity of $\tau$ that
$\varphi$ is a homeomorphism.
Let $\psi \coloneqq \varphi^{-1} \colon Y \to A^M$ denote the inverse homeomorphism.
Since the map from $Y$ to the finite discrete set $A$, given by $y \mapsto \psi(y)(1_M)$ for all $y \in Y$, is continuous, there exists a finite subset $S \subset M$ and a map $\mu \colon A^S \to A$ such that
$\psi(y)(1_M) = \mu(y\vert_S)$ for all $y \in Y$.
Observe now that $\psi$ is $M$-equivariant since it is the inverse of an $M$-equivariant bijective map.
Thus, for all $y \in Y$ and $m \in M$, we have
\[
\psi(y)(m) = (\psi(y) \circ R_m)(1_M) = \psi(y \circ R_m)(1_M) = \mu((y \circ R_m)|_S).  
\]
We deduce that $\psi$ is the restriction to $Y$ of the cellular automaton $\sigma \colon A^M \to A^M$ admitting $S$ as a memory set and $\mu$ as the associated local defining map.
For all $c \in A^M$, we then have
\[
(\sigma \circ \tau)(c) = \sigma(\tau(c)) = \psi(\varphi(c)) = c.
\]
This shows that $\sigma \circ \tau = \Id_{A^M}$.
\end{proof}

In the group setting, the following result is \cite[Theorem~1]{castillo-ramirez_ca}.

\begin{proposition}
\label{p:char-surj-monoids}
Let $M$ be a monoid.
Then the following conditions are equivalent.
\begin{enumerate}[\rm (a)]
\item The monoid $M$ is surjunctive;
\item for every finite set $A$, the monoid $\CA(M,A)$ is directly finite.
\end{enumerate}
\end{proposition}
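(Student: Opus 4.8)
The plan is to prove the two implications (a)$\Rightarrow$(b) and (b)$\Rightarrow$(a) separately, using Lemma~\ref{l:char-inj-ca} as the bridge between injectivity of a cellular automaton and one-sided invertibility in the monoid $\CA(M,A)$.

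For the direction (a)$\Rightarrow$(b), fix a finite set $A$ and suppose $\sigma, \tau \in \CA(M,A)$ satisfy $\sigma \circ \tau = \Id_{A^M}$; I want to conclude $\tau \circ \sigma = \Id_{A^M}$. The relation $\sigma \circ \tau = \Id_{A^M}$ forces $\tau$ to be injective, so surjunctivity of $M$ gives that $\tau$ is also surjective, hence bijective. A bijective map with a left inverse has that left inverse as its genuine two-sided inverse, so $\tau \circ \sigma = \Id_{A^M}$ as well. Thus every one-sided invertible element of $\CA(M,A)$ is invertible, i.e.\ $\CA(M,A)$ is directly finite.

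For the direction (b)$\Rightarrow$(a), fix a finite set $A$ and let $\tau \in \CA(M,A)$ be injective. By Lemma~\ref{l:char-inj-ca} there is $\sigma \in \CA(M,A)$ with $\sigma \circ \tau = \Id_{A^M}$, so $\tau$ is right-invertible (equivalently, left-invertible, depending on convention) in the monoid $\CA(M,A)$. Since $\CA(M,A)$ is directly finite by hypothesis, $\tau$ is invertible in $\CA(M,A)$; in particular $\tau$ is surjective as a map $A^M \to A^M$. As $A$ was an arbitrary finite set, $M$ is surjunctive.

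The content of the proposition is genuinely light once Lemma~\ref{l:char-inj-ca} is in hand: the only subtlety is bookkeeping about which side the inverses live on, and making sure that ``directly finite'' is applied to $\CA(M,A)$ in the correct orientation. I expect no real obstacle here; the essential input — that an injective cellular automaton over a finite alphabet is automatically left-invertible in $\CA(M,A)$, which rests on the compactness of $A^M$ and the generalized Curtis--Hedlund--Lyndon theorem — has already been isolated in Lemma~\ref{l:char-inj-ca}.
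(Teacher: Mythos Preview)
Your proposal is correct and follows essentially the same route as the paper's own proof: both directions are argued exactly as you outline, with Lemma~\ref{l:char-inj-ca} supplying the left inverse needed for (b)$\Rightarrow$(a), and bijectivity plus the existence of a one-sided inverse yielding the two-sided inverse for (a)$\Rightarrow$(b). The only cosmetic difference is that the paper phrases the conclusion of (a)$\Rightarrow$(b) as $\sigma = \tau^{-1}$ before deducing $\tau \circ \sigma = \Id_{A^M}$, which is just your ``a bijective map with a left inverse has that left inverse as its genuine two-sided inverse'' said another way.
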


\begin{proof}
Suppose (a).
Let $A$ be a finite set and let $\sigma, \tau \in \CA(M,A)$ such that $\sigma \circ \tau = \Id_{A^M}$.
As $\Id_{A^M}$ is injective, this implies that $\tau$ is injective.
Thus, $\tau$ is bijective by (a).
From $\sigma \circ \tau = \Id_{A^M}$, we then deduce that $\sigma = \tau^{-1}$, so that 
$\tau \circ \sigma = \Id_{A^M}$.
This shows that $\CA(M,A)$ is directly finite.
Therefore, (a) implies (b).
\par
Conversely, suppose (b). Let $A$ be a finite set and let   $\tau \colon A^M \to A^M$ be an injective cellular automaton.
 By Lemma~\ref{l:char-inj-ca}, $\tau$ is a left-invertible element in $\CA(M,A)$.
Since $\CA(M,A)$ is directly finite by (b), we deduce that $\tau$ is invertible in $\CA(M,A)$.
In particular, $\tau$ is bijective and hence surjective.
This shows that (b) implies (a).
\end{proof} 
  
\subsection{Model theory of algebraically closed fields}
We refer to the monograph by Marker~\cite{marker-model-theory-gtm} for a detailed exposition of first-order logic, model theory,
and in particular the results we shall use concerning elementary equivalence of algebraically closed fields 
(see also \cite[Section~5]{gromov-esav}).
\par
The \emph{first-order language of rings} is the tuple ${\mathcal L} = (+,-,\times, 0,1)$, where
$+$ and $\times$ are \emph{binary function symbols} (corresponding to the ``sum'' and ``multiplication'' operations, respectively),
$-$ is a \emph{unary function symbol} (corresponding to the ``opposite''), and $0$ and $1$ are \emph{constant symbols}
(corresponding to the ``zero'' and the ``unity'', respectively).
\par
By recursively composing function symbols (resp.\ relation symbols) and using the \emph{quantifiers} $\forall$ and $\exists$ for \emph{variables} $x,y,z \ldots$, as well as the \emph{logical connectives} $\neg$ (\emph{negation}), $\land$ (\emph{conjunction}), $\lor$ (\emph{disjunction}), and $\rightarrow$ (\emph{implication}), one forms (first-order) \emph{formulas}. For example, a formula
expressing direct finiteness is given by
\[
\forall x \forall y: xy = 1 \rightarrow yx = 1.
\]
A formula with no free variables is called a \emph{sentence}.
\par
Two fields are called \emph{elementary equivalent} if they satisfy the same sentences in the language of rings.
The following  results play a pivotal role in our proof of Theorem~\ref{t:main}.

\begin{theorem}[First Lefschetz principle]
\label{t:el-equiv-fields}
Any two algebraically closed fields of the same characteristic are elementary equivalent.
\end{theorem}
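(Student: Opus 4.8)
The plan is to deduce this from the completeness of the first-order theory $\mathrm{ACF}_p$ of algebraically closed fields of a fixed characteristic $p$, where $p$ is either $0$ or a prime. Recall that a consistent theory $T$ in the language $\mathcal{L}$ is \emph{complete} if for every $\mathcal{L}$-sentence $\varphi$ either $T \vdash \varphi$ or $T \vdash \neg \varphi$, and that this is equivalent to saying that all models of $T$ are elementarily equivalent. Since every algebraically closed field of characteristic $p$ is a model of $\mathrm{ACF}_p$, completeness of each $\mathrm{ACF}_p$ gives the theorem at once. One first checks that $\mathrm{ACF}_p$ is axiomatizable in $\mathcal{L} = (+,-,\times,0,1)$: the field axioms are first-order; the characteristic is pinned down by the single sentence $\underbrace{1+\cdots+1}_{p}=0$ when $p$ is prime, and by the scheme $\{\underbrace{1+\cdots+1}_{n}\neq 0 : n\ge 2\}$ when $p=0$; and algebraic closedness is the scheme asserting, for each $n\ge 1$, that every monic polynomial of degree $n$ has a root.

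To prove completeness I would apply the \emph{{\L}o\'s--Vaught test}: a theory in a countable language with no finite models that is $\kappa$-categorical for some infinite cardinal $\kappa$ is complete. That $\mathrm{ACF}_p$ has no finite models is immediate, since an algebraically closed field is infinite: if $F=\{a_1,\dots,a_n\}$ were finite, the monic polynomial $1+\prod_{i=1}^{n}(x-a_i)$ would have no root in $F$. So everything comes down to establishing $\kappa$-categoricity of $\mathrm{ACF}_p$ for every uncountable cardinal $\kappa$; this is the main point of the proof.

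The categoricity rests on the structure theory of algebraically closed fields through transcendence bases. Every field $F$ of characteristic $p$ contains a unique prime subfield $k$ (namely $\F_p$ if $p>0$ and $\Q$ if $p=0$), and $F$ admits a transcendence basis $B$ over $k$; if $F$ is moreover algebraically closed then $F$ is an algebraic closure of the purely transcendental extension $k(B)$. A cardinality count gives $|F|=\max(\aleph_0,|B|)$, so when $|F|=\kappa$ is uncountable the transcendence degree $|B|$ equals $\kappa$. Hence, given two algebraically closed fields $F_1,F_2$ of characteristic $p$ and cardinality $\kappa$, any bijection between transcendence bases $B_1\to B_2$ over the common prime field $k$ extends to a $k$-isomorphism $k(B_1)\to k(B_2)$ and then, by uniqueness of algebraic closures up to isomorphism, to an isomorphism $F_1\cong F_2$. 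The {\L}o\'s--Vaught test now yields completeness of $\mathrm{ACF}_p$, and the theorem follows. The genuinely delicate ingredient is precisely this transcendence-degree argument: it relies on the standard but nontrivial facts that transcendence bases exist, that their cardinality is an invariant of the field, and that an isomorphism of base fields lifts (uniquely up to isomorphism) to their algebraic closures. (Alternatively, one could derive completeness from quantifier elimination for $\mathrm{ACF}$, but the categoricity route is more self-contained.)
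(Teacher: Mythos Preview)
Your proof is correct and follows exactly the standard route: the paper itself does not give an argument but simply cites Marker's textbook, where the proof proceeds precisely via $\kappa$-categoricity of $\mathrm{ACF}_p$ for uncountable $\kappa$ (the transcendence-degree argument you sketch) together with the {\L}o\'s--Vaught test. So your proposal is essentially an unpacking of the cited reference, with no substantive divergence.
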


\begin{proof}
This is a classical result in model theory which can be rephrased by saying that the theory of algebraically closed fields of a fixed characteristic is complete 
(see  Proposition~2.2.5 and Theorem~2.2.6 in~\cite{marker-model-theory-gtm}).
\end{proof}

Isomorphic fields are always elementary equivalent but the converse does not hold in general.
For example, the algebraic closure $\overline{\Q}$ of $\Q$ and the field $\C$ of complex numbers,
being two algebraically closed fields of  characteristic $0$,
are elementary equivalent by Theorem \ref{t:el-equiv-fields}, but are not isomorphic
since $\overline{\Q}$ is countable while $\C$ is uncountable.

\begin{theorem}[Second Lefschetz principle]
\label{t:char-p-char-0}
Let $\psi$ be a sentence in the language of rings which is satisfied by some (and therefore any) algebraically closed field of characteristic $0$.
Then there exists an integer $N$ such that $\psi$ is satisfied by any algebraically closed field of characteristic $p \geq N$.
\end{theorem}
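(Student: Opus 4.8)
The plan is to derive this statement from the First Lefschetz principle (Theorem~\ref{t:el-equiv-fields}) together with the compactness theorem of first-order logic. First I would record the easy half: if $\psi$ is satisfied by some algebraically closed field of characteristic $0$, then by Theorem~\ref{t:el-equiv-fields} it is satisfied by \emph{every} algebraically closed field of characteristic $0$, since any two such fields are elementary equivalent. The rest of the argument then proceeds by contradiction: suppose that no integer $N$ as in the statement exists, so that for every integer $N$ there is a prime $p \geq N$ and an algebraically closed field of characteristic $p$ satisfying $\neg\psi$.

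Next I would assemble a single first-order theory in the language of rings, namely
\[
T \coloneqq \mathrm{ACF} \cup \{\chi_p : p \text{ prime}\} \cup \{\neg\psi\},
\]
where $\mathrm{ACF}$ is the usual recursive axiom scheme for algebraically closed fields (the field axioms together with, for each $d \geq 1$, the sentence asserting that every monic polynomial of degree $d$ has a root) and $\chi_p$ is the sentence asserting that the sum of $p$ copies of $1$ is nonzero. By construction, a model of $T$ is precisely an algebraically closed field of characteristic $0$ in which $\neg\psi$ holds. The essential point is that being of characteristic $0$ is encoded here not by a single sentence but by the infinite family $\{\chi_p : p \text{ prime}\}$, which is exactly why an appeal to compactness is needed rather than a direct manipulation.

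I would then check that every finite subset $T_0 \subseteq T$ is satisfiable. Such a $T_0$ involves only finitely many of the sentences $\chi_p$, say for primes $p_1, \dots, p_k$. By the contradiction hypothesis, taking $N = 1 + \max\{p_1,\dots,p_k\}$, there is a prime $q > \max\{p_1,\dots,p_k\}$ and an algebraically closed field $F$ of characteristic $q$ with $F \models \neg\psi$; this $F$ satisfies all of $\mathrm{ACF}$, it satisfies $\neg\psi$, and it satisfies each $\chi_{p_i}$ since $q \neq p_i$, whence $F \models T_0$. By the compactness theorem, $T$ itself has a model, i.e.\ there exists an algebraically closed field of characteristic $0$ satisfying $\neg\psi$, contradicting the conclusion of the first paragraph. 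Therefore an integer $N$ with the required property exists.

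As for difficulties, there is essentially no genuine obstacle: this is a textbook compactness argument, and every ingredient — completeness of the theory of algebraically closed fields of a fixed characteristic, finite satisfiability, and the compactness theorem — is standard and may be quoted from \cite{marker-model-theory-gtm}. The only points requiring a little care are that $\mathrm{ACF}$ and the property of having characteristic $0$ are not finitely axiomatizable, so compactness is truly doing the work, and that the hypothesis that $\psi$ holds in some characteristic-$0$ algebraically closed field is precisely what licenses the use of Theorem~\ref{t:el-equiv-fields} in the first step.
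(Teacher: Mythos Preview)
Your compactness argument is correct and is precisely the standard proof of this transfer principle; the paper itself does not spell out any argument but simply cites \cite[Corollary~2.2.10]{marker-model-theory-gtm}, where essentially the same compactness reasoning appears. So your proposal is consistent with the paper's approach, just more explicit.
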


\begin{proof}
This is (iii) $\implies$ (v) in \cite[Corollary~2.2.10]{marker-model-theory-gtm}.
\end{proof}

\section{Linear cellular automata over monoids}
\label{s:lca}

\subsection{Definition}
Let $M$ be a monoid and let $V$ be a vector space over a field $K$.
Then the  configuration space $V^M  = \prod_{m \in M} V$ over the alphabet $V$ inherits from $V$ a natural $K$-vector space product structure
for which
\[
(\lambda c + \lambda' c')(m) = \lambda c(m) + \lambda' c'(m)
\]
for all $\lambda,\lambda' \in K$, $c,c' \in V^M$, and $m \in M$.
Note that the shift action of $M$ on $V^M$ is linear, that is, for every $m \in M$, the map
$V^M \to V^M$, given by $c \mapsto c \circ R_m$, is $K$-linear.
\par
A cellular automaton $\tau \colon V^M \to V^M$ over $(M,V)$ which is $K$-linear is called a \emph{linear cellular automaton}.

\begin{proposition}
\label{p:char-lca-ldm}
Let $M$ be a monoid and let $V$ be a vector space over a field $K$.
Let $\tau \colon V^M \to V^M$ be a cellular automaton. 
Suppose that  $S$ is a memory set for $\tau$ and denote by $\mu \colon V^S \to V$
the associated local defining map. Then the following conditions are equivalent:
\begin{enumerate}[\rm (a)]
\item $\tau$ is a linear cellular automaton;
\item $\mu$ is $K$-linear.
\end{enumerate}
\end{proposition}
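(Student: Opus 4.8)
The plan is to derive both implications directly from the defining relation~\eqref{e:def-ca}, together with the observation recorded just after the definition of the local defining map, namely that $\mu(c|_S) = \tau(c)(1_M)$ for every $c \in V^M$.

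First I would treat (b) $\Rightarrow$ (a). Fix $m \in M$. The ``read-off'' map $V^M \to V^S$ sending $c$ to $(c \circ R_m)|_S$ is $K$-linear: composition with $R_m$ is the shift by $m$, which is $K$-linear by the remark preceding the definition of linear cellular automata, and the restriction map $V^M \to V^S$ is $K$-linear by the very definition of the product vector space structure on configuration spaces. Hence, if $\mu$ is $K$-linear, then for each $m \in M$ the map $c \mapsto \tau(c)(m) = \mu((c \circ R_m)|_S)$ is $K$-linear, being a composite of $K$-linear maps. Since this holds for every $m$, the map $\tau \colon V^M \to V^M$ is $K$-linear; that is, $\tau$ is a linear cellular automaton.

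Next I would treat (a) $\Rightarrow$ (b). Let $\lambda, \lambda' \in K$ and $y, y' \in V^S$. Since the restriction map $V^M \to V^S$ is surjective, I can choose $c, c' \in V^M$ with $c|_S = y$ and $c'|_S = y'$. Then $(\lambda c + \lambda' c')|_S = \lambda y + \lambda' y'$, so using the identity $\mu(\,\cdot\,|_S) = \tau(\,\cdot\,)(1_M)$ and the $K$-linearity of $\tau$ I obtain
\[
\mu(\lambda y + \lambda' y') = \tau(\lambda c + \lambda' c')(1_M) = \lambda\, \tau(c)(1_M) + \lambda'\, \tau(c')(1_M) = \lambda\, \mu(y) + \lambda'\, \mu(y').
\]
Therefore $\mu$ is $K$-linear.

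Neither implication presents a genuine obstacle; the only points that require a moment's attention are the linearity of the read-off map $c \mapsto (c \circ R_m)|_S$ in the first implication, which rests on linearity of the shift, and, in the second, the surjectivity of the restriction map $V^M \to V^S$, which is what lets one realize an arbitrary pair of patterns $y, y' \in V^S$ as restrictions of global configurations and thereby transport the linearity of $\tau$ to $\mu$.
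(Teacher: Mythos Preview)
Your proof is correct and, for (b) $\Rightarrow$ (a), essentially identical to the paper's: both observe that $c \mapsto (c \circ R_m)|_S$ is $K$-linear (shift composed with restriction/projection) and then compose with $\mu$. For (a) $\Rightarrow$ (b) there is a minor stylistic difference: the paper picks the specific $K$-linear section $\psi \colon V^S \to V^M$ that extends a pattern by $0_V$ outside $S$, and then simply writes $\mu = \pi_{1_M} \circ \tau \circ \psi$ as a composite of three $K$-linear maps; you instead choose arbitrary lifts $c,c'$ of $y,y'$ and verify linearity by direct computation. Both arguments rest on the same identity $\mu(c|_S)=\tau(c)(1_M)$ and the surjectivity of restriction, so the difference is cosmetic; the paper's phrasing is slightly more structural, yours slightly more hands-on, and either buys the result in one line.
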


\begin{proof}
For $T \subset M$, denote by $\pi_T \colon V^M \to V^T$ the projection map
and write $\pi_m \coloneqq \pi_{\{m\}}$ for every $m \in M$. 
\par
Suppose first that $\tau$ is $K$-linear.
Consider the embedding map $\psi \colon V^S \to V^M$ defined, for every $p \in V^S$, by   $\psi(p)(m) \coloneqq  p(m)$ if $m \in S$ and $\psi(p)(m) \coloneqq 0_V$ otherwise.
It follows from~\eqref{e:def-ca} that $\mu = \pi_{1_M} \circ \tau \circ \psi$.
As $\pi_{1_M}$ and $\psi$ are clearly $K$-linear, we deduce that $\mu$ is $K$-linear. This shows that (a) implies (b).
\par
Suppose now that $\mu$ is $K$-linear.
It follows from~\eqref{e:def-ca} that
$\tau(c) = (\mu \circ \pi_S (c \circ R_m))_{m \in M}$ for every $c \in V^M$.
As $\pi_S$ and the action of $M$ on $V^M$ are $K$-linear, we deduce that $\tau$ is $K$-linear.
This shows that (b) implies (a).
 \end{proof}

Given a monoid $M$ and a vector space $V$ over a field $K$, we denote by $\LCA(M,V)$ the set of all linear cellular automata $\tau \colon V^M \to V^M$.
Consider the $K$-algebra $\End_K(V^M)$
consisting of all endomorphisms of the $K$-vector space $V^M$.
From the generalized Curtis-Hedlund-Lyndon theorem,
we deduce that $\LCA(V,M)$ is a subalgebra of  $\End_K(V^M)$. 

\subsection{Representation of one-dimensional linear cellular automata}
Let $M$ be a monoid and let $K$ be a field.
Recall from the Introduction that $K[M]$ is the $K$-algebra admitting $M$ as a basis for its underlying $K$-vector space structure and whose multiplication is obtained by extending $K$-linearly the monoid operation on $M$.
Thus, every $\alpha \in K[M]$ can be uniquely written in the form
\begin{equation}
\label{e:element-K-M}
\alpha = \sum_{m \in M} \alpha_m m,
\end{equation} 
with $\alpha_m \in K$ for all $m \in M$ and $\alpha_m = 0$ for all but finitely $m \in M$.
The finite set $\supp(\alpha)  \coloneqq  \{m \in M : \alpha_m \not= 0\}$ is called the \emph{support} of $\alpha$.
\par
If $\alpha = \sum_{m \in M} \alpha_m m, \alpha' = \sum_{m \in M} \alpha'_m m \in K[M]$ and $\lambda, \lambda' \in K$,  we have
\[
\lambda \alpha + \lambda' \alpha' = \sum_{m \in M} (\lambda \alpha_m + \lambda' \alpha'_m) m
\]
 and
 \[
 \alpha \alpha' = \sum_{m,m' \in M} \alpha_m \alpha'_{m'} m m'.
 \] 
Let $c \in K^M$ and let $\alpha = \sum_{m \in M} \alpha_m m \in K[M]$.  
Let $S \coloneqq \supp(\alpha) \subset M$ denote the support of $\alpha$.
We define the configuration $c * \alpha  \in K^M$  by setting
\begin{equation}
\label{e:conv-K-K}
(c * \alpha)(m) \coloneqq \sum_{s \in S} c(s m) \alpha_s 
\end{equation}
for all $m \in M$.
\par
Observe that the map $K^M \times K[M] \to K^M$, given by $(c,\alpha) \mapsto c * \alpha$ is $K$-bilinear.

\begin{proposition}
\label{p:alpha-beta-c}
Let $M$ be a monoid and let $K$ be a field.
Let $c \in K^M$ and let $\alpha,\beta \in K[M]$.
Then one has $(c * \alpha) *\beta = c * (\alpha \beta) $.
\end{proposition}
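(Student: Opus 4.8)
The plan is to prove the identity $(c * \alpha) * \beta = c * (\alpha\beta)$ by direct computation, expanding both sides in terms of the coefficients of $\alpha$ and $\beta$ and the values of $c$, then matching them up. Write $\alpha = \sum_{s \in M} \alpha_s s$ and $\beta = \sum_{t \in M} \beta_t t$, with $S \coloneqq \supp(\alpha)$ and $T \coloneqq \supp(\beta)$ finite. Since the map $c \mapsto c * \alpha$ and the map $(c,\alpha) \mapsto c * \alpha$ are $K$-linear in the relevant arguments (as noted just before the statement), and since $\alpha\beta = \sum_{s,t} \alpha_s \beta_t \, st$, it suffices in principle to reduce to the case where $\alpha = s$ and $\beta = t$ are single monoid elements; but it is just as quick to carry the coefficients along.

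First I would compute the left-hand side. By definition~\eqref{e:conv-K-K} applied twice, for any $m \in M$ we have
\[
\bigl((c * \alpha) * \beta\bigr)(m) = \sum_{t \in T} (c * \alpha)(tm)\, \beta_t = \sum_{t \in T} \left( \sum_{s \in S} c(s(tm))\, \alpha_s \right) \beta_t = \sum_{s \in S}\sum_{t \in T} c(s t m)\, \alpha_s \beta_t,
\]
using associativity of $M$ to write $s(tm) = (st)m = stm$. Next I would compute the right-hand side. We have $\alpha\beta = \sum_{s \in S, t \in T} \alpha_s \beta_t \, st$, and although $\supp(\alpha\beta)$ may be a proper subset of $\{st : s \in S, t \in T\}$ (because of cancellation among terms with equal products $st$), the defining formula~\eqref{e:conv-K-K} for $c * (\alpha\beta)$ can equally be written as a sum over any finite set containing $\supp(\alpha\beta)$, grouping coefficients; concretely,
\[
\bigl(c * (\alpha\beta)\bigr)(m) = \sum_{u \in M} (\alpha\beta)_u \, c(um) = \sum_{u \in M} \left( \sum_{\substack{s \in S,\, t \in T \\ st = u}} \alpha_s \beta_t \right) c(um) = \sum_{s \in S}\sum_{t \in T} \alpha_s \beta_t \, c(stm).
\]
Comparing the two displays gives $\bigl((c*\alpha)*\beta\bigr)(m) = \bigl(c*(\alpha\beta)\bigr)(m)$ for every $m \in M$, hence the claimed equality of configurations.

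The only point requiring a little care — and the place I would state explicitly rather than skip — is the passage from the sum over $u \in \supp(\alpha\beta)$ to the double sum over $(s,t) \in S \times T$: one must check that evaluating $c$ at $um$ and then summing the grouped coefficients is the same as summing $\alpha_s\beta_t\, c(stm)$ termwise, which is just the observation that $c(um) = c(stm)$ whenever $st = u$ together with finiteness of all sums involved (so reordering is legitimate). There is no genuine obstacle here; the identity is essentially the associativity of the convolution action, and the proof is a short manipulation of finite sums using only the associativity of the monoid operation and the bilinearity already recorded in the text. I expect the whole argument to fit in a few lines.
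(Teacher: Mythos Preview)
Your proposal is correct and follows essentially the same approach as the paper: a direct expansion of both sides using the definition~\eqref{e:conv-K-K}, with the key step being the regrouping of the double sum $\sum_{s,t} \alpha_s\beta_t\, c(stm)$ according to the value of $u = st$. The paper's proof is organized as a single chain of equalities starting from $((c*\alpha)*\beta)(m)$ and ending at $(c*(\alpha\beta))(m)$, while you compute the two sides separately and compare, but the content is identical.
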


\begin{proof}
Write $\alpha = \sum_{m \in M} \alpha_m m$ and $\beta = \sum_{m \in M} \beta_m m$ as in~\eqref{e:element-K-M}.
Let $S \coloneqq \supp(\alpha)$ and $T \coloneqq \supp(\beta)$.
Observe that $\supp(\alpha \beta) \subset ST$.
For all $m \in M$, we have
\begin{align*}
((c * \alpha) * \beta )(m)
&= \sum_{t \in T} (c * \alpha)(t m) \beta_t \\
&= \sum_{t \in T}  \left(\sum_{s \in S}  c(s t m) \alpha_s \right) \beta_t \\
&= \sum_{s \in S, t \in T} c(s t m) \alpha_s \beta_t  \\
&= \sum_{s \in S, t \in T : s t = u} c(u m) \alpha_s \beta_t  \\
&= \sum_{ u \in ST}  c(u m) (\alpha \beta)_u\\
&= (c * (\alpha\beta)) (m).
\end{align*}
This shows that  $(c * \alpha)  * \beta = c * (\alpha \beta)$.
\end{proof}

\begin{proposition}
\label{p:rep-1-dim-lca}
Let $M$ be a  monoid and let $K$ be a field.
Let $\alpha = \sum_{m \in M} \alpha_m m \in K[M]$  and let $S \coloneqq \supp(\alpha)$. 
Define the map 
$\tau_\alpha \colon K^M \to K^M$ by
\begin{equation}
\label{e:def-tau-alpha-1}
\tau_\alpha(c) \coloneqq c * \alpha
\end{equation}
for all $c \in K^M$.
Then $\tau_\alpha$ is a linear cellular automaton whose minimal memory set is $S$ and whose associated local defining map  is the map $\mu \colon K^S \to K$ defined by
\begin{equation}
\label{e:mu-alpha}
\mu(p) \coloneqq  \sum_{s \in S} p(s)\alpha_s
\end{equation}
for all $p \in  K^S$.
\end{proposition}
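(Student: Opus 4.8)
The plan is to verify directly that the formula $\tau_\alpha(c) = c * \alpha$ defines a cellular automaton with the claimed memory set and local defining map, and then invoke Proposition~\ref{p:char-lca-ldm} to upgrade ``cellular automaton'' to ``linear cellular automaton''. First I would unwind the definition~\eqref{e:conv-K-K} of the convolution: for $c \in K^M$ and $m \in M$,
\[
\tau_\alpha(c)(m) = (c * \alpha)(m) = \sum_{s \in S} c(sm)\alpha_s = \sum_{s \in S} (c \circ R_m)(s)\, \alpha_s = \mu\big((c \circ R_m)|_S\big),
\]
where $\mu \colon K^S \to K$ is defined by~\eqref{e:mu-alpha}. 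This is exactly the defining identity~\eqref{e:def-ca} for a cellular automaton over $(M,K)$ with memory set $S$ and local defining map $\mu$; hence $\tau_\alpha \in \CA(M,K)$. Since $\mu$ is manifestly $K$-linear (it is a fixed linear combination of the coordinate evaluations $p \mapsto p(s)$), Proposition~\ref{p:char-lca-ldm} gives that $\tau_\alpha$ is a linear cellular automaton, so $\tau_\alpha \in \LCA(M,K)$.

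It remains to check that $S = \supp(\alpha)$ is in fact the \emph{minimal} memory set of $\tau_\alpha$, not merely a memory set. By the uniqueness of the minimal memory set and the characterization recalled before the statement (a finite subset of $M$ is a memory set for $\tau_\alpha$ if and only if it contains the minimal one), it suffices to show that no proper subset $S' \subsetneq S$ is a memory set for $\tau_\alpha$. Suppose some $S'$ with $S' \subset S$ were a memory set, and pick $s_0 \in S \setminus S'$. I would produce two configurations $c, c' \in K^M$ that agree on $S'$ but with $\tau_\alpha(c)(1_M) \neq \tau_\alpha(c')(1_M)$, contradicting the fact that the local defining map associated to $S'$ would have to determine $\tau_\alpha(c)(1_M)$ from $c|_{S'} = (c \circ R_{1_M})|_{S'}$. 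The natural candidates are $c$ the zero configuration and $c'$ the indicator configuration $\mathbf{1}_{\{s_0\}}$ (value $1$ at $s_0$, value $0$ elsewhere); then $c|_{S'} = c'|_{S'} = 0$, while $\tau_\alpha(c)(1_M) = 0$ and $\tau_\alpha(c')(1_M) = \sum_{s \in S} c'(s)\alpha_s = \alpha_{s_0} \neq 0$ since $s_0 \in \supp(\alpha)$. This contradiction shows $S$ is minimal.

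The only mild subtlety — and the one point I would state carefully rather than wave at — is the logical step in the minimality argument: from ``$S'$ is a memory set'' one must extract that $\tau_\alpha(c)(1_M)$ depends only on $c|_{S'}$, which is precisely the $m = 1_M$ instance of~\eqref{e:def-ca} applied with memory set $S'$ (using $R_{1_M} = \Id_M$). Everything else is a routine unwinding of definitions. I do not expect any real obstacle here; the computation displayed above is the heart of the matter, and the minimality check is a short separation-of-configurations argument.
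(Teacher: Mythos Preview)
Your proof is correct and follows essentially the same approach as the paper: both unwind the convolution to verify~\eqref{e:def-ca} with memory set $S$ and local map $\mu$, then invoke Proposition~\ref{p:char-lca-ldm} for linearity, and finally argue minimality by testing at the indicator configuration $\mathbf{1}_{\{s_0\}}$. The only cosmetic difference is that the paper removes a single element $s_0$ (taking $T = S \setminus \{s_0\}$) and uses linearity of the local defining map $\nu$ associated to $T$ to conclude $\nu(0)=0$, whereas you compare the two configurations $0$ and $\mathbf{1}_{\{s_0\}}$ directly without invoking linearity of the hypothetical local map on $S'$; both yield the same contradiction $0 \neq \alpha_{s_0}$.
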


\begin{proof}
For all $c \in K^M$ and $m \in M$, we have
\[
\tau_\alpha(c)(m) = (c * \alpha )(m) = \sum_{s \in S} c(s m) \alpha_s = \sum_{s \in S} (c \circ R_m)(s)\alpha_s.
\]
We deduce that
\[
\tau_\alpha(c)(m) = \mu((c \circ R_m)|_S),
\]
where $\mu \colon K^S \to K$ is as in~\eqref{e:mu-alpha}.
This shows that $\tau_\alpha$ is a cellular automaton over $(M,K)$ admitting $S$ as a memory set with associated local defining map $\mu$.
As $\mu$ is $K$-linear, it follows from Proposition~\ref{p:char-lca-ldm} that $\tau_\alpha$ is a linear cellular automaton.
\par
It remains to show that $S$ is the minimal memory set for $\tau_\alpha$.
Suppose by contradiction that there exists $s_0 \in S$ such that $T \coloneqq S \setminus \{s_0\}$ is a memory set for $\tau_\alpha$.
Consider the configuration $c \in K^M$ defined by $c(s_0) \coloneqq  1$ and $c(m) \coloneqq 0$ for all $m \in M \setminus \{s_0\}$.
Let $\nu \colon K^T \to K$ denote the local defining map for $\tau_\alpha$ associated with $T$.
We then have
\[
\tau_\alpha(c)(1_M)  = \nu(c|_T) = \nu(0_{K^T}) = 0
\]
since $\nu$ is $K$-linear by Proposition~\ref{p:char-lca-ldm}.
On the other hand, we have 
\[
\tau_\alpha(c)(1_M) = (c * \alpha)(1_M) = \sum_{s \in S} c(s) \alpha_s = \alpha_{s_0} \not= 0
\]
since $s_0 \in S = \supp(\alpha)$, a contradiction.
This shows that  $S$ is the minimal memory set for $\tau_\alpha$.
\end{proof}

\begin{theorem}
\label{t:rep-one-dim-lca}
Let $M$ be a  monoid and let $K$ be a field.
Then the map $\Phi \colon K[M] \to \LCA(M,K)$ given by
$\Phi(\alpha) \coloneqq \tau_\alpha$, where $\tau_\alpha$ is defined by~\eqref{e:def-tau-alpha-1},  is an anti-isomorphism of $K$-algebras.
\end{theorem}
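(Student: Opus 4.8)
The plan is to establish that $\Phi$ is a bijective $K$-linear map that reverses multiplication, i.e.\ $\Phi(\alpha\beta) = \Phi(\beta)\circ\Phi(\alpha)$ for all $\alpha,\beta \in K[M]$. Linearity is immediate: since the map $(c,\alpha)\mapsto c*\alpha$ from $K^M\times K[M]$ to $K^M$ is $K$-bilinear (as observed just before Proposition~\ref{p:alpha-beta-c}), the assignment $\alpha\mapsto\tau_\alpha$ is $K$-linear, and we already know from Proposition~\ref{p:rep-1-dim-lca} that $\tau_\alpha$ does land in $\LCA(M,K)$, so $\Phi$ is a well-defined $K$-linear map into $\LCA(M,K)$.

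Next I would verify the anti-multiplicativity. For $\alpha,\beta\in K[M]$ and $c\in K^M$ we compute
\[
\big(\Phi(\beta)\circ\Phi(\alpha)\big)(c) = \tau_\beta(\tau_\alpha(c)) = (c*\alpha)*\beta = c*(\alpha\beta) = \tau_{\alpha\beta}(c) = \Phi(\alpha\beta)(c),
\]
where the crucial middle equality is exactly Proposition~\ref{p:alpha-beta-c}. One also checks that $\Phi(1_M) = \tau_{1_M} = \Id_{K^M}$ directly from~\eqref{e:conv-K-K}, so $\Phi$ is a homomorphism of $K$-algebras from $K[M]$ to $\LCA(M,K)^{\mathrm{op}}$, that is, an anti-homomorphism.

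It remains to prove that $\Phi$ is bijective. For injectivity, suppose $\tau_\alpha = 0$, i.e.\ $c*\alpha = 0$ for all $c$; evaluating on the configurations of the form used in the proof of Proposition~\ref{p:rep-1-dim-lca} (the indicator of a single element $s_0\in M$) at $1_M$ forces every coefficient $\alpha_{s_0}$ to vanish, so $\alpha = 0$; more conceptually, Proposition~\ref{p:rep-1-dim-lca} identifies the minimal memory set of $\tau_\alpha$ with $\supp(\alpha)$ and recovers the coefficients from the local defining map, so $\alpha\mapsto\tau_\alpha$ is injective. For surjectivity, let $\tau\in\LCA(M,K)$ be arbitrary, pick a memory set $S$ for $\tau$ with associated local defining map $\mu\colon K^S\to K$, which is $K$-linear by Proposition~\ref{p:char-lca-ldm}; setting $\alpha_s \coloneqq \mu(e_s)$ for $s\in S$ (where $e_s\in K^S$ is the indicator of $s$) and $\alpha\coloneqq\sum_{s\in S}\alpha_s s\in K[M]$, linearity of $\mu$ gives $\mu(p) = \sum_{s\in S}p(s)\alpha_s$ for all $p\in K^S$, which by the formula in Proposition~\ref{p:rep-1-dim-lca} (or by a direct unwinding of~\eqref{e:def-ca} and~\eqref{e:conv-K-K}) means $\tau = \tau_\alpha = \Phi(\alpha)$. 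Hence $\Phi$ is an anti-isomorphism of $K$-algebras.

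The only genuinely substantive input is Proposition~\ref{p:alpha-beta-c}, which has already been proved; the rest is bookkeeping. The mild subtlety to be careful about is the \emph{direction} of the arrow — because convolution $c*\alpha$ puts $\alpha$ on the right while composition of cellular automata applies $\tau_\alpha$ first and then $\tau_\beta$, the natural identity $(c*\alpha)*\beta = c*(\alpha\beta)$ produces $\tau_\beta\circ\tau_\alpha = \tau_{\alpha\beta}$, so one really does get an \emph{anti}-isomorphism rather than an isomorphism, and I would state this explicitly to avoid confusion when this theorem is invoked later (in the matrix form, Theorem~\ref{t:rep-d-dim-lca}).
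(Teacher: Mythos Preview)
Your proof is correct and follows essentially the same approach as the paper's: both establish $K$-linearity from the bilinearity of $*$, use Proposition~\ref{p:alpha-beta-c} for anti-multiplicativity, verify $\Phi(1_M)=\Id_{K^M}$, deduce injectivity from the minimal-memory-set identification in Proposition~\ref{p:rep-1-dim-lca}, and obtain surjectivity by reading off the element $\alpha=\sum_{s\in S}\alpha_s s$ from the linear local defining map of an arbitrary $\tau\in\LCA(M,K)$. The only difference is cosmetic (ordering of the steps and your added remark about the direction of the anti-isomorphism).
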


\begin{proof}
Let $\lambda,\lambda' \in K$ and $\alpha,\alpha' \in K[M]$.
For all $c \in K^M$, we have
\begin{align*}
\Phi(\lambda \alpha + \lambda' \alpha')(c) &= \tau_{\lambda \alpha + \lambda' \alpha'}(c) \\
&= c * (\lambda \alpha + \lambda' \alpha')  \\
&= \lambda c * \alpha  + \lambda' c * \alpha' \\
&= \lambda \tau_\alpha(c) + \lambda' \tau_{\alpha'}(c) \\
&= \lambda \Phi(\alpha)(c) + \lambda' \Phi(\alpha')(c) \\
&= (\lambda \Phi(\alpha) + \lambda' \Phi(\alpha'))(c).
\end{align*}
It follows that $\Phi(\lambda \alpha + \lambda' \alpha') = \lambda \Phi(\alpha) + \lambda' \Phi(\alpha')$.
 This shows that  $\Phi$ is $K$-linear.
 \par
The minimal memory set of $0 \in \LCA(M,K)$ is the empty set. Since, for every $\alpha \in K[M]$,  the minimal memory set of $\Phi(\alpha) = \tau_\alpha$ is the support of $\alpha$ by Proposition~\ref{p:rep-1-dim-lca},
 we deduce that $\Phi$ is injective.  
 \par
For all $c \in K^M$ and $m \in M$, we have $\tau_{1_M}(c)(m)  = c(m)$.
Therefore, $\Phi(1_M)$ is the identity map on $K^M$, i.e., the multiplicative identity element of  $\LCA(M,K)$. 
\par
Suppose that $\sigma \colon K^M \to K^M$ is a linear cellular automaton with memory set $S$ and associated local defining map $\mu \colon K^S \to K$.
Since $\mu$ is $K$-linear by Proposition~\ref{p:char-lca-ldm},
there exist $\beta_s \in K$, $s \in S$, such that
$\mu(p) = \sum_{s \in S} p(s)\beta_s$ for all $p \in K^S$.
Up to replacing $S$ by a smaller subset, we can assume $\beta_s \not= 0$ for all $s \in S$.
Setting $\beta \coloneqq \sum_{s \in S}\beta_s s \in K[M]$, we then have $\Phi(\beta) = \tau_\beta = \sigma$ by Proposition~\ref{p:rep-1-dim-lca}.
This shows that $\Phi$ is surjective. 
 \par
Let $\alpha,\beta \in K[M]$. For all $c \in K^m$, we have
\begin{align*}
\tau_{\alpha\beta}(c) &= c * (\alpha\beta)  \\
&= (c * \alpha) * \beta ) &&\text{(by Proposition~\ref{p:alpha-beta-c})} \\
&= \tau_\alpha(c) * \beta \\
&= \tau_\beta(\tau_\alpha(c)) \\
&= (\tau_\beta \circ \tau_\alpha)(c).
\end{align*}
Therefore, we have $\Phi(\alpha\beta) = \tau_{\alpha\beta} = \tau_\beta \circ \tau_\alpha =  \Phi(\beta) \circ \Phi(\alpha)$.
This completes the proof that $\Phi$ is an anti-isomorphism of $K$-algebras.
\end{proof}

\subsection{Matrix representation of finite-dimensional linear cellular automata}
Let $M$ be a monoid, let $K$ be a field, and let $d \geq 1$ be an integer.
In the sequel, we shall systematically use the natural identification $(K^d)^M = (K^M)^d$.
\par 
Given a configuration
\begin{equation}
\label{e:c-row-vector-in-K-M}
c = (c_1,\dots,c_d) \in (K^M)^d = (K^d)^M
\end{equation}
and a matrix
\[ 
A = (\alpha_{i j})_{1 \leq i,j \leq d}
= 
\begin{pmatrix}
\alpha_{1 1} &\dots & \alpha_{1 d} \\
\vdots & \vdots & \vdots \\
\alpha_{d 1} & \dots & \alpha_{d d}
\end{pmatrix} 
\in \Mat_d(K[M]),
\] 
we define $c * A \in (K^d)^M = (K^M)^d$ by
\begin{equation}
\label{e:A-star-c-col-matrix-K-M}
c * A = ((c * A)_1, \dots, (c * A)_d) 
\quad
\text{with}
\quad
(c * A )_j \coloneqq  \sum_{i = 1}^d c_i * \alpha_{i j}  \in K^M \text{  for  } 1 \leq j \leq d.
\end{equation}
Observe that this extends the definition of the operation $*$ defined in Formula~\eqref{e:conv-K-K}.
Note also that the map $(K^d)^M \times \Mat_d(K[M])  \to (K^d)^M$, given by $(c,A) \mapsto c * A$, is $K$-bilinear.
\par
The following result extends Proposition~\ref{p:alpha-beta-c}.

\begin{proposition}
\label{p:A-B-c}
Let $M$ be a monoid, let $K$ be a field, and let $d \geq 1$ be an integer.
Let $c \in (K^d)^M$ and let $A,B \in \Mat_d(K[M])$.  
Then one has $(c * A)* B   = c * (AB)$.
\end{proposition}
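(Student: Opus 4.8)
The statement $(c*A)*B = c*(AB)$ for $c \in (K^d)^M$ and $A,B \in \Mat_d(K[M])$ is the matrix analogue of Proposition~\ref{p:alpha-beta-c}, and the natural plan is to reduce it to that scalar case by unwinding the definition~\eqref{e:A-star-c-col-matrix-K-M} componentwise. Write $A = (\alpha_{ij})$ and $B = (\beta_{jk})$, so that $(AB)_{ik} = \sum_{j=1}^d \alpha_{ij}\beta_{jk}$ by the definition of matrix multiplication in $\Mat_d(K[M])$. Fix an index $k$ with $1 \le k \le d$; the goal is to show that the $k$-th component of $(c*A)*B$ equals the $k$-th component of $c*(AB)$, both lying in $K^M$.

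First I would compute $\bigl((c*A)*B\bigr)_k$. By~\eqref{e:A-star-c-col-matrix-K-M} applied to the configuration $c*A$ and the matrix $B$, this equals $\sum_{j=1}^d (c*A)_j * \beta_{jk}$. Substituting $(c*A)_j = \sum_{i=1}^d c_i * \alpha_{ij}$ and using that the scalar operation $*\colon K^M \times K[M] \to K^M$ is $K$-bilinear (in particular additive in its left argument, as noted after~\eqref{e:conv-K-K}), this becomes $\sum_{j=1}^d \sum_{i=1}^d (c_i * \alpha_{ij}) * \beta_{jk}$. Now apply Proposition~\ref{p:alpha-beta-c} to each summand: $(c_i * \alpha_{ij}) * \beta_{jk} = c_i * (\alpha_{ij}\beta_{jk})$. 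Hence
\[
\bigl((c*A)*B\bigr)_k = \sum_{i=1}^d \sum_{j=1}^d c_i * (\alpha_{ij}\beta_{jk}).
\]

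Next I would compute $\bigl(c*(AB)\bigr)_k$. By~\eqref{e:A-star-c-col-matrix-K-M} applied to $c$ and the matrix $AB$, this equals $\sum_{i=1}^d c_i * (AB)_{ik} = \sum_{i=1}^d c_i * \bigl(\sum_{j=1}^d \alpha_{ij}\beta_{jk}\bigr)$. Using again that $*$ is $K$-linear in its second argument, the inner sum pulls out: $c_i * \bigl(\sum_{j=1}^d \alpha_{ij}\beta_{jk}\bigr) = \sum_{j=1}^d c_i * (\alpha_{ij}\beta_{jk})$. Therefore $\bigl(c*(AB)\bigr)_k = \sum_{i=1}^d \sum_{j=1}^d c_i * (\alpha_{ij}\beta_{jk})$, which is exactly the expression obtained above for $\bigl((c*A)*B\bigr)_k$. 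Since $k$ was arbitrary, this proves $(c*A)*B = c*(AB)$.

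There is no real obstacle here: the proof is a bookkeeping exercise in swapping finite sums, and the only substantive inputs are Proposition~\ref{p:alpha-beta-c} (the scalar associativity) and the bilinearity of the scalar $*$. The one point to be careful about is keeping the summation indices straight — the index $j$ plays the role of the ``contracted'' index in matrix multiplication and must match the index used in the definition of $(c*A)_j$ — but once the definitions are expanded in the order above, the two double sums coincide term by term.
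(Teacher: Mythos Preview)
Your proof is correct and follows essentially the same approach as the paper's own argument: expand $((c*A)*B)_k$ componentwise via~\eqref{e:A-star-c-col-matrix-K-M}, invoke Proposition~\ref{p:alpha-beta-c} on each scalar term, and use bilinearity of $*$ to collapse the double sum into $(c*(AB))_k$. The only cosmetic difference is the choice of index names (the paper uses $j$ for the outer column and $i$ for the contracted index, whereas you use $k$ and $j$) and that the paper writes a single chain of equalities rather than computing the two sides separately.
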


\begin{proof}
Let us write $A = (\alpha_{i j})_{1 \leq i,j \leq d}$, $B = (\beta_{i j})_{1 \leq i,j \leq d}$, and $C \coloneqq  A B  = (\gamma_{i j})_{1 \leq i,j \leq d}$.
For all $1 \leq j \leq d$, we have
\begin{align*}
((c * A) * B )_j
&= \sum_{i=1}^d (c * A)_i * \beta_{i j} \\
&= \sum_{i=1}^d \left(\sum_{k =1}^d c_k * \alpha_{k i}\right) * \beta_{i j}\\
&= \sum_{i=1}^d\sum_{k=1}^d (c_k * \alpha_{k i}) * \beta_{i j} \\
&= \sum_{i=1}^d\sum_{k=1}^d c_k * (\alpha_{k i}  \beta_{i j}) && \mbox{(by Proposition \ref{p:alpha-beta-c})} \\
&= \sum_{k=1}^d c_k * \left(\sum_{i=1}^d \alpha_{k i} \beta_{i j}\right)\\
&= \sum_{k=1}^d c_k * \gamma_{k j} \\
& = (c * C)_j.
\end{align*}
This shows that $(c * A)* B = c * (AB)$.
\end{proof}

\begin{remark}
\label{rem:KdM-right-module}
Denoting by $I_d$ the identity matrix in $\Mat_d(K[M])$,
it follows from~\eqref{e:A-star-c-col-matrix-K-M} that $c * I_d  = c$ for all $c \in (K^d)^M$.
As $(c + c') * A = c * A + c' * A$ (by linearity of $*$ with respect to the first variable) and $(c * A) * B = c* (AB)$ (by Proposition~\ref{p:A-B-c}) for all $c,c' \in (K^d)^M$ and $A,B \in \Mat_d(\K[M])$, 
we deduce that the abelian group $(K^d)^M$ equipped with the scalar multiplication associated with $*$, has the structure of a right $\Mat_d(K[M])$-module.
\end{remark}

In the sequel, we shall use the following notation. Define the \emph{support} of a matrix $A = (\alpha_{i j})_{1 \leq i,j \leq d}  \in \Mat_d(K[M])$ as  the finite subset
$\supp(A) \subset M$ given by
\[
\supp(A) \coloneqq \bigcup_{1 \leq i,j \leq d} \supp(\alpha_{i j}).
\]
Thus, writing 
\[
\alpha_{i j} = \sum_{m \in M} \alpha_{i,j,m} m
\]
with $\alpha_{i,j,m} \in K$ for all $m \in M$ and $1 \leq i,j \leq d$ (cf.~\eqref{e:element-K-M}), we have
\[
\supp(A) = \{m \in M : \alpha_{i,j,m} \not= 0 \text{ for some } 1 \leq i,j \leq d\}.
\] 
 
The following result extends Proposition~\ref{p:rep-1-dim-lca}.

\begin{proposition}
\label{p:rep-d-dim-lca}
Let $M$ be a monoid, let $K$ be a field, and let $d \geq 1$ be an integer.
Let $A = (\alpha_{i j})_{1 \leq i,j \leq d}   \in \Mat_d(K[M])$ and let $S \coloneqq \supp(A)$.
Define the map 
$\tau_A \colon (K^d)^M \to (K^d)^M$ by
\begin{equation}
\label{e:def-tau-A-d}
\tau_A(c) \coloneqq c * A
\end{equation}
for all $c \in (K^d)^M$, where $c * A$ is given by~\eqref{e:A-star-c-col-matrix-K-M}.
Then $\tau_A$ is a linear cellular automaton whose minimal memory set is $S$ and whose associated local defining map  is the map $\mu \colon (K^d)^S  \to K^d$ defined by
\begin{equation}
\label{e:mu-matrice}
\mu(p) = (\mu_1(p),\dots,\mu_d(p))
\quad
\text{with}
\quad
\mu_j(p) \coloneqq  \sum_{i = 1}^d p_i(s)\alpha_{i, j, s}
\end{equation}
for all $p = (p_1,\dots,p_d) \in   (K^S)^d = (K^d)^S$.
\end{proposition}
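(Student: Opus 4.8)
The plan is to follow the proof of Proposition~\ref{p:rep-1-dim-lca}, carrying the extra matrix indices along. First I would compute $\tau_A(c)(m)$ directly from the definitions \eqref{e:A-star-c-col-matrix-K-M} and \eqref{e:conv-K-K}. Writing $c = (c_1,\dots,c_d)$ and $\alpha_{ij} = \sum_{m} \alpha_{i,j,m}\, m$, for each $1 \leq j \leq d$ and each $m \in M$ one gets
\[
\tau_A(c)(m)_j = (c * A)_j(m) = \sum_{i=1}^d (c_i * \alpha_{ij})(m) = \sum_{i=1}^d \sum_{s \in \supp(\alpha_{ij})} c_i(sm)\,\alpha_{i,j,s} = \sum_{i=1}^d \sum_{s \in S} (c_i \circ R_m)(s)\,\alpha_{i,j,s},
\]
the last equality using $\supp(\alpha_{ij}) \subset S = \supp(A)$ for all $i,j$ together with the fact that the omitted coefficients vanish. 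This exhibits $\tau_A(c)(m)$ as $\mu\bigl((c \circ R_m)|_S\bigr)$ with $\mu$ exactly the map \eqref{e:mu-matrice}, so $\tau_A$ is a cellular automaton over $(M, K^d)$ admitting $S$ as a memory set with associated local defining map $\mu$. Since $\mu$ is manifestly $K$-linear in $p = (p_1,\dots,p_d)$, Proposition~\ref{p:char-lca-ldm} then yields that $\tau_A$ is a linear cellular automaton.

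It remains to check that $S$ is the \emph{minimal} memory set of $\tau_A$. Here I would argue by contradiction as in the one-dimensional case: suppose $T \coloneqq S \setminus \{s_0\}$ is a memory set for $\tau_A$ for some $s_0 \in S$, and let $\nu \colon (K^d)^T \to K^d$ be the associated local defining map, which is $K$-linear by Proposition~\ref{p:char-lca-ldm}. Since $s_0 \in S = \supp(A)$, there are indices $i_0, j_0$ with $\alpha_{i_0, j_0, s_0} \neq 0$. Consider the configuration $c = (c_1,\dots,c_d) \in (K^d)^M$ with $c_{i_0}(s_0) \coloneqq 1$ and all other coordinates of all $c_i$ equal to $0$. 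Then $c|_T = 0$, so $\tau_A(c)(1_M) = \nu(c|_T) = \nu(0) = 0$; in particular its $j_0$-th coordinate vanishes. On the other hand, the displayed formula with $m = 1_M$ gives $\tau_A(c)(1_M)_{j_0} = \sum_{i=1}^d c_i(s_0)\,\alpha_{i, j_0, s_0} = \alpha_{i_0, j_0, s_0} \neq 0$, a contradiction. Hence $S$ is the minimal memory set of $\tau_A$, completing the proof.

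I do not expect a serious obstacle: everything reduces to Propositions~\ref{p:char-lca-ldm} and~\ref{p:rep-1-dim-lca} once the computation above is carried out. The only point requiring a little care is the consistent use of the two identifications $(K^d)^M = (K^M)^d$ and $(K^d)^S = (K^S)^d$, and choosing in the minimality step the test configuration supported at the single point $s_0$ in the single coordinate $i_0$ singled out by a nonzero entry $\alpha_{i_0,j_0,s_0}$ of $A$, so that the nonvanishing of one coordinate of $\tau_A(c)(1_M)$ is visible.
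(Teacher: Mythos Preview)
Your proof is correct and follows essentially the same route as the paper: a direct verification that $\tau_A(c)(m) = \mu\bigl((c\circ R_m)|_S\bigr)$ with $\mu$ as in~\eqref{e:mu-matrice}, followed by the identical minimality argument via the test configuration supported at $(i_0,s_0)$. The only cosmetic difference is that the paper packages the computation of $(c_i*\alpha_{ij})(m)$ by citing Proposition~\ref{p:rep-1-dim-lca}, whereas you unfold it inline from~\eqref{e:conv-K-K}; the content is the same.
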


\begin{proof}
Given $i,j \in \{1,\dots,d\}$, we have  $\supp(\alpha_{i j}) \subset S$. Thus, it follows from Proposition~\ref{p:rep-1-dim-lca} that the map $K^M \to K^M$, given by $c_i \mapsto c_i * \alpha_{i j} $,
is a linear cellular automaton admitting $S$ as a memory set with associated local defining map
$\mu_{i j} \colon K^S \to K$ given by
\[
\mu_{i j}(p_i) \coloneqq  \sum_{s \in S} p_i(s)\alpha_{i,j,s}
\]
 for all $p_i \in K^S$.
 Consequently, we have
 \[
 (c_i * \alpha_{i j} )(m) = \mu_{i j}((c_i \circ R_m)|_S)
 \]
 for all $c_i \in K^M$.
 By linearity, this gives us
\begin{align*}
(c * A)_j(m) &=  \sum_{i = 1}^d (c_i * \alpha_{i j} )(m) \\
&= \sum_{i = 1}^d \mu_{i j}((c_i \circ R_m)|_S) \\
&= \mu_j((c \circ R_m)|_S) && \mbox{(cf.\ \eqref{e:mu-matrice})}
\end{align*}
 for all $c = (c_1,\dots,c_d) \in (K^M)^d = (K^d)^M$.
 Since the map $\mu = (\mu_1,\dots,\mu_d) \colon (K^S)^d = (K^d)^S \to K^d$ is $K$-linear,
we deduce that $\tau_A \colon (K^d)^M \to (K^d)^M$ is a linear cellular automaton with memory set $S$ and associated local defining map $\mu$.
 \par
 It remains  to show that $S$ is the minimal memory set of $\tau_A$.
Suppose by contradiction that there exists $s_0 \in S$ such that $T \coloneqq S \setminus \{s_0\}$ is a memory set for $\tau_\alpha$.
As $s_0$ is in the support of $A$,
there exists  $i_0,j_0 \in \{1,\dots,d\}$ such that $\alpha_{i_0,j_0,s_0} \not= 0$.
  Consider the configuration $c = (c_1,\dots,c_d) \in (K^M)^d = (K^d)^M$ defined by 
  $c_{i_0}(s_0) \coloneqq  1$ and $c_i(m) \coloneqq 0$ for all $(i,m) \in \{1,\dots,d\} \times M \setminus \{(i_0,s_0)\}$.
Let $\nu \colon (K^d)^T \to K^d$ denote the local defining map for $\tau_A$ associated with $T$.
We then have
\[
\tau_A(c)(1_M)  = \nu(c|_T) = \nu(0_{(K^d)^T}) = 0_{K^d}
\]
since $\nu$ is $K$-linear by Proposition~\ref{p:char-lca-ldm}.
On the other hand, the $j_0$-th component of $\tau_A(c)(1_M)$ is 
\[
(c * A)_{j_0}(1_M) = \sum_{i=1}^d\sum_{s \in S} \alpha_{i,j_0,s} c_i(s) = \alpha_{i_0,j_0,s_0} \not= 0
\]
since $s_0 \in \supp(\alpha_{i_0 j_0})$, a contradiction.
This shows that  $S$ is the minimal memory set for $\tau_A$.
\end{proof}

The following result extends Theorem~\ref{t:rep-one-dim-lca}.

\begin{theorem}
\label{t:rep-d-dim-lca}
Let $M$ be a monoid, let $K$ be a field, and let $d \geq 1$ be an integer.
Then the map $\Psi \colon \Mat_d(K[M]) \to \LCA(M,K^d)$ given by $\Psi(A) \coloneqq \tau_A$
for all $A \in \Mat_d(K[M])$, where $\tau_A$ is defined by~\eqref{e:def-tau-A-d},  is an anti-isomorphism of $K$-algebras.
\end{theorem}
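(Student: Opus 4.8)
The plan is to prove Theorem~\ref{t:rep-d-dim-lca} by essentially the same pattern used for Theorem~\ref{t:rep-one-dim-lca}, promoting each step from $K[M]$ to $\Mat_d(K[M])$ and from $\LCA(M,K)$ to $\LCA(M,K^d)$, with Proposition~\ref{p:A-B-c} and Proposition~\ref{p:rep-d-dim-lca} playing the roles that Proposition~\ref{p:alpha-beta-c} and Proposition~\ref{p:rep-1-dim-lca} played before. Concretely, I must check that $\Psi$ is (i) well-defined with values in $\LCA(M,K^d)$, (ii) $K$-linear, (iii) injective, (iv) surjective, (v) unital, and (vi) multiplication-reversing.

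First, (i) is immediate: Proposition~\ref{p:rep-d-dim-lca} tells us that $\tau_A \in \LCA(M,K^d)$ for every $A \in \Mat_d(K[M])$, so $\Psi$ is well-defined. For (ii), given $\lambda,\lambda' \in K$ and $A,A' \in \Mat_d(K[M])$, one uses the $K$-bilinearity of the operation $*$ with respect to its first variable together with the fact that $c*(\lambda A + \lambda' A') = \lambda(c*A) + \lambda'(c*A')$, which follows entrywise from~\eqref{e:A-star-c-col-matrix-K-M} and the corresponding one-dimensional identity; the computation is word-for-word the one in the proof of Theorem~\ref{t:rep-one-dim-lca}. For (v), $c * I_d = c$ for all $c$ (noted already in Remark~\ref{rem:KdM-right-module}), so $\Psi(I_d) = \tau_{I_d} = \Id_{(K^d)^M}$, the identity of $\LCA(M,K^d)$. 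For (vi), for any $A,B \in \Mat_d(K[M])$ and $c \in (K^d)^M$ we have $\tau_{AB}(c) = c*(AB) = (c*A)*B = \tau_B(\tau_A(c)) = (\tau_B \circ \tau_A)(c)$ by Proposition~\ref{p:A-B-c}, so $\Psi(AB) = \Psi(B)\circ\Psi(A)$, exactly mirroring the end of the proof of Theorem~\ref{t:rep-one-dim-lca}.

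For injectivity (iii), I would argue as in the one-dimensional case using minimal memory sets: if $\Psi(A) = \tau_A = 0$, then the minimal memory set of $\tau_A$ is the empty set, but by Proposition~\ref{p:rep-d-dim-lca} the minimal memory set of $\tau_A$ equals $\supp(A)$; hence $\supp(A) = \emptyset$, which forces every entry $\alpha_{ij}$ of $A$ to be $0$, i.e.\ $A = 0$. Since $\Psi$ is $K$-linear, this gives injectivity. For surjectivity (iv), let $\sigma \in \LCA(M,K^d)$ with memory set $S$ and associated local defining map $\mu \colon (K^d)^S \to K^d$, which is $K$-linear by Proposition~\ref{p:char-lca-ldm}. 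Writing $\mu = (\mu_1,\dots,\mu_d)$ with each $\mu_j \colon (K^d)^S = (K^S)^d \to K$ a $K$-linear form, there exist scalars $\alpha_{i,j,s} \in K$ (for $1 \le i,j \le d$, $s \in S$) with $\mu_j(p) = \sum_{i=1}^d \sum_{s \in S} p_i(s)\,\alpha_{i,j,s}$; setting $\alpha_{ij} \coloneqq \sum_{s \in S} \alpha_{i,j,s}\, s \in K[M]$ and $A \coloneqq (\alpha_{ij})_{1\le i,j\le d} \in \Mat_d(K[M])$, Proposition~\ref{p:rep-d-dim-lca} shows that $\tau_A$ has $S$ as a memory set with the same local defining map $\mu$, hence $\tau_A = \sigma$, so $\Psi(A) = \sigma$.

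I do not expect any genuine obstacle here: every ingredient has already been isolated in the preceding propositions, and the proof is a bookkeeping assembly of Propositions~\ref{p:char-lca-ldm}, \ref{p:A-B-c}, and~\ref{p:rep-d-dim-lca} following the template of Theorem~\ref{t:rep-one-dim-lca}. The only mildly delicate point is the surjectivity step, where one must be careful with the identification $(K^d)^S = (K^S)^d$ and with the fact that a $K$-linear map $(K^d)^S \to K^d$ is determined by a family of scalars indexed by $\{1,\dots,d\}\times\{1,\dots,d\}\times S$; this is exactly the data repackaged into a matrix over $K[M]$, and no minimality of $S$ is needed since any memory set works.
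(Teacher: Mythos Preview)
Your proof is correct and follows the same template as the paper's: steps (ii), (iii), (v), and (vi) are essentially identical to the paper's argument.

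The one genuine difference is in the surjectivity step. You argue directly: given $\sigma \in \LCA(M,K^d)$ with memory set $S$ and $K$-linear local defining map $\mu$, you read off the scalars $\alpha_{i,j,s}$ from the components $\mu_j$ and package them into a matrix $A$. The paper instead reduces to the one-dimensional case: it introduces the coordinate embeddings $\iota_i \colon K^M \to (K^d)^M$ and projections $\pi_j \colon (K^d)^M \to K^M$, observes (via the generalized Curtis--Hedlund--Lyndon theorem) that each $\tau_{ij} \coloneqq \pi_j \circ \sigma \circ \iota_i$ lies in $\LCA(M,K)$, and then invokes the surjectivity of $\Phi$ from Theorem~\ref{t:rep-one-dim-lca} to obtain $\alpha_{ij} \in K[M]$. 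Your route is shorter and avoids the detour through the one-dimensional theorem; the paper's route has the minor advantage of cleanly reusing the already-proved result. One small care point in your version: when you say ``Proposition~\ref{p:rep-d-dim-lca} shows that $\tau_A$ has $S$ as a memory set with the same local defining map $\mu$'', note that Proposition~\ref{p:rep-d-dim-lca} is stated with $S = \supp(A)$, which may be strictly contained in your $S$; the conclusion still holds (any finite superset of the minimal memory set is a memory set, and formula~\eqref{e:mu-matrice} extends verbatim with $\alpha_{i,j,s} = 0$ for $s \notin \supp(A)$), but you should say a word to that effect.
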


\begin{proof}
Let $\lambda,\lambda' \in K$ and $A,A' \in \Mat_d(K[M])$.
For all $c  \in  (K^d)^M$, we have
\begin{align*}
\Psi(\lambda A + \lambda' A')(c) 
&= \tau_{\lambda A + \lambda' A'}(c) \\
&= c * (\lambda A + \lambda' A') \\
&= \lambda (c * A)  + \lambda' (c *  A') \\
&= \lambda \tau_A(c) + \lambda' \tau_{A'}(c) \\
&= \lambda \Psi(A)(c) + \lambda' \Psi(A')(c) \\
&= (\lambda \Phi(A) + \lambda' \Psi(A'))(c).
\end{align*}
It follows that $\Psi(\lambda A + \lambda' A') = \lambda \Psi(A) + \lambda' \Psi(A')$.
 Therefore, the map  $\Psi$ is $K$-linear.
 \par
The minimal memory set of $0 \in \LCA(M,K^d)$ is the empty set. 
Since, for every $A \in \Mat_d(K[M])$,  the minimal memory set of $\Psi(A)$ is the support of $A$ by Proposition~\ref{p:rep-d-dim-lca},
 we deduce that $\Psi$ is injective.  
 \par
 Denoting by $I_d$ the identity matrix in $\Mat_d(K[M])$,
we have  $c * I_d  = c$ for all $c \in (K^d)^M$ (cf.~Remark~\ref{rem:KdM-right-module}).
 Therefore, $\Psi(I_d) = \tau_{I_d}$ is the identity map on $(K^d)^M$, i.e., the multiplicative identity element of  $\LCA(M,K^d)$. 
\par
Let us show that $\Psi$ is surjective.
Suppose that $\tau \in \LCA(M,K^d)$. 
For  $i,j \in \{1,\dots,d\}$, consider the embedding $\iota_i \colon K^M \to (K^M)^d = (K^d)^M$ 
sending every $c_i \in K^M$ to 
the configuration $(0,\dots,0,c_i,0,\dots,0) \in (K^M)^d = ((K^d)^M$, where $c_i$ is located on the $i$-th component, 
and the projection  map $\pi_j \colon  (K^d) ^M = (K^M)^d  \to K^M$
onto the $j$-th factor of $(K^M)^d$.
The maps $\iota_i$ and $\pi_j$ are clearly $K$-linear, uniformly continuous (with respect to the prodiscrete uniform structures on $K^M$ and $(K^d)^M$), and equivariant (with respect to the shift actions of $M$ on $K^M$ and $(K^d)^M$).
By using the generalized Curtis-Hedlund-Lyndon theorem,
we deduce that the composite map $\tau_{i j} \coloneqq \pi_j \circ \tau \circ \iota_i \colon K^M \to K^M$ is 
a linear cellular automaton over $(M,K^d)$.
It then follows from the surjectivity of $\Phi$ in Theorem~\ref{t:rep-one-dim-lca}
that there exists $\alpha_{i j} \in K[M]$ such that
$\tau_{i j} (c_i) = c_i * \alpha_{i j} $ for all $c_i \in K^M$.
By linearity, setting $A \coloneqq (\alpha_{i j})_{1 \leq i,j \leq d} \in \Mat_d(K[M])$,
we get, for all $c = (c_1,\dots,c_d) \in (K^M)^d$,
\begin{align*}
\tau(c)
&= \tau(c_1,\dots,c_d) \\
&= \sum_{i = 1}^d \tau(\iota_i(c_i)) \\
&= \left(\sum_{i = 1}^d (\pi_1 \circ \tau \circ \iota_i)(c_i), \dots,\sum_{i = 1}^d (\pi_d \circ \tau \circ \iota_i)(c_i)\right) \\
&= \left(\sum_{i = 1}^d c_i * \alpha_{i 1} ,\dots, \sum_{i = 1}^d c_i * \alpha_{i d} \right) \\
&= \left((c * A)_1,\dots,(c * A )_d \right) \\
&= c * A\\
&= \tau_A(c).
\end{align*}
Therefore $\tau = \tau_A = \Psi(A)$.
This shows that $\Psi$ is surjective. 
\par
Let now $A,B \in \Mat_d(K[M])$. 
For all $c \in (K^d)^M$, we have
\begin{align*}
\tau_{AB}(c) 
&= c * (AB)  \\
&= (c * A)  * B &&\text{(by Proposition~\ref{p:A-B-c})} \\
&= \tau_B(\tau_A(c)) \\
&= (\tau_B \circ \tau_A)(c).
\end{align*}
It follows that  $\Psi(A B) = \tau_{AB} = \tau_B \circ \tau_A =  \Psi(B) \circ \Psi(A)$.
This completes the proof that $\Psi$ is an anti-isomorphism of $K$-algebras.
\end{proof}

As an immediate consequence of Theorem~\ref{t:rep-d-dim-lca}, we obtain the following result.

\begin{corollary}
\label{c:rep-d-dim-lca}
Let $M$ be a monoid and  let $K$ be a field.
Suppose that $V$ is a $K$-vector space with finite dimension $d \coloneqq \dim_K(V) \geq 1$.
Then the $K$-algebras $\LCA(M,V)$ and $\Mat_d(K[M])$ are anti-isomorphic. 
\end{corollary}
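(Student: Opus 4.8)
The plan is to reduce the statement to Theorem~\ref{t:rep-d-dim-lca} by transporting everything along a choice of basis. Since $\dim_K(V) = d$, fix a $K$-linear isomorphism $\theta \colon V \to K^d$. First I would observe that $\theta$ induces a bijection $\widetilde\theta \colon V^M \to (K^d)^M$ defined by $\widetilde\theta(c) \coloneqq \theta \circ c$ for all $c \in V^M$. This map is visibly $K$-linear, and since $\theta$ is a bijection between two sets each carrying the discrete uniform structure, $\widetilde\theta$ is an isomorphism of uniform spaces for the prodiscrete uniform structures on $V^M$ and $(K^d)^M$; moreover $\widetilde\theta$ is equivariant with respect to the shift actions, because by associativity of composition $\widetilde\theta(c \circ R_m) = \widetilde\theta(c) \circ R_m$ for all $c \in V^M$ and $m \in M$. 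In particular $\widetilde\theta^{\,-1}$ enjoys the same three properties.

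Next I would use conjugation by $\widetilde\theta$ to compare the two algebras of linear cellular automata. Given $\tau \in \LCA(M,V)$, set $\widehat\tau \coloneqq \widetilde\theta \circ \tau \circ \widetilde\theta^{\,-1} \colon (K^d)^M \to (K^d)^M$. As a composite of $K$-linear, uniformly continuous, and equivariant maps, $\widehat\tau$ is itself $K$-linear, uniformly continuous, and equivariant, hence a linear cellular automaton over $(M,K^d)$ by the generalized Curtis-Hedlund-Lyndon theorem. Thus $\tau \mapsto \widehat\tau$ defines a map $\LCA(M,V) \to \LCA(M,K^d)$, which is clearly a homomorphism of $K$-algebras (conjugation respects addition, scalar multiplication, and composition, and sends $\Id_{V^M}$ to $\Id_{(K^d)^M}$) and admits conjugation by $\widetilde\theta^{\,-1}$ as a two-sided inverse; hence it is an isomorphism of $K$-algebras $\LCA(M,V) \cong \LCA(M,K^d)$.

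Finally, composing this isomorphism with the inverse of the anti-isomorphism $\Psi \colon \Mat_d(K[M]) \to \LCA(M,K^d)$ of Theorem~\ref{t:rep-d-dim-lca} (the inverse of an anti-isomorphism being again an anti-isomorphism) yields an anti-isomorphism of $K$-algebras $\LCA(M,V) \to \Mat_d(K[M])$, as required. There is essentially no obstacle: the only step that is not purely formal is checking that conjugating a linear cellular automaton by $\widetilde\theta$ again produces a cellular automaton, and this is precisely where the Curtis-Hedlund-Lyndon characterization makes the argument painless; one could instead track memory sets and local defining maps explicitly through $\theta$, but that would be strictly more laborious and add nothing of substance.
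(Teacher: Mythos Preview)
Your proof is correct and follows exactly the same approach as the paper's: choose a basis to identify $V$ with $K^d$, transport linear cellular automata by conjugation to obtain a $K$-algebra isomorphism $\LCA(M,V)\cong\LCA(M,K^d)$, and then compose with $\Psi^{-1}$ from Theorem~\ref{t:rep-d-dim-lca}. The paper's own proof is a terse three-line version of precisely this argument; you have simply supplied the details (notably the use of the generalized Curtis--Hedlund--Lyndon theorem to verify that conjugation lands back in $\LCA$) that the paper leaves implicit.
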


\begin{proof}
Choosing a basis for $V$, we get a $K$-vector space isomorphism $V \to K^d$.
This yields a $K$-algebra isomorphism $\Theta \colon \LCA(M,V) \to \LCA(M,K^d)$.
The composition $\Psi^{-1} \circ \Theta$ then provides the desired $K$-algebra
anti-isomorphism of $\LCA(M,V)$ onto $\Mat_d(K[M])$.
\end{proof}

\subsection{Linear surjunctivity and stable finiteness}

Let $M$ be a monoid. 
Given a field $K$, one says that $M$ is $K$-\emph{surjunctive} if, for
any finite dimensional vector space $V$ over $K$, every injective linear cellular automaton $\tau \colon V^M \to V^M$
is surjective. One says that $M$ is \emph{linearly surjunctive} 
if $M$ is $K$-surjunctive for every  field $K$.
Finally, one says that $M$ is \emph{finitely-linearly surjunctive}
if $M$ is $K$-surjunctive for every finite field $K$.
Observe that every surjunctive monoid is finitely linearly surjunctive since every finite-dimensional vector space over a finite field is finite.

\begin{theorem}
\label{t:lin-surj-d-dir-fin}
Let $M$ be a monoid,  let $K$ be a field, and let $d \geq 1$ be an integer.
Suppose that every injective linear cellular automaton $\tau \colon (K^d)^M \to (K^d)^M$ is surjective.
Then the $K$-algebra $\Mat_d(K[M])$ is directly finite. 
\end{theorem}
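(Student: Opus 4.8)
The plan is to transport the problem through the anti-isomorphism $\Psi \colon \Mat_d(K[M]) \to \LCA(M,K^d)$ furnished by Theorem~\ref{t:rep-d-dim-lca}, settle it on the side of linear cellular automata where the hypothesis can be applied, and then transport the conclusion back. Recall that $\Mat_d(K[M])$ is directly finite precisely when $AB = I_d$ implies $BA = I_d$ for all $A, B \in \Mat_d(K[M])$, so this is what we must prove.

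First I would fix $A, B \in \Mat_d(K[M])$ with $AB = I_d$ and set $\tau_A \coloneqq \Psi(A)$ and $\tau_B \coloneqq \Psi(B)$, which are linear cellular automata over $(M,K^d)$. Applying $\Psi$ to the relation $AB = I_d$ and using that $\Psi$ reverses products and sends $I_d$ to $\Id_{(K^d)^M}$, we obtain $\tau_B \circ \tau_A = \Psi(B) \circ \Psi(A) = \Psi(AB) = \Psi(I_d) = \Id_{(K^d)^M}$. In particular $\tau_A$ has a left inverse as a map, hence is injective. Now I invoke the hypothesis: the injective linear cellular automaton $\tau_A \colon (K^d)^M \to (K^d)^M$ is surjective, hence bijective. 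Composing $\tau_B \circ \tau_A = \Id_{(K^d)^M}$ on the right with $\tau_A^{-1}$ gives $\tau_B = \tau_A^{-1}$ (a purely set-theoretic step, not requiring that $\tau_A^{-1}$ be a cellular automaton), whence $\tau_A \circ \tau_B = \Id_{(K^d)^M}$.

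It then remains to translate this back: since $\Psi$ is an anti-homomorphism, $\Psi(BA) = \Psi(A) \circ \Psi(B) = \tau_A \circ \tau_B = \Id_{(K^d)^M} = \Psi(I_d)$, and injectivity of $\Psi$ (it is an anti-isomorphism) forces $BA = I_d$. This proves that $\Mat_d(K[M])$ is directly finite. The argument is short, and I do not expect a genuine obstacle; the only points deserving care are keeping track of the fact that $\Psi$ is an \emph{anti}-isomorphism, so that the relation $AB = I_d$ makes $\tau_A$ (rather than $\tau_B$) the injective map to which the hypothesis is applied, and observing that passing from $\tau_B \circ \tau_A = \Id$ to $\tau_A \circ \tau_B = \Id$ uses only the bijectivity of $\tau_A$.
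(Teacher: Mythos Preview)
Your proof is correct and follows essentially the same route as the paper's: transport through the anti-isomorphism $\Psi$ of Theorem~\ref{t:rep-d-dim-lca}, use $\tau_B \circ \tau_A = \Id$ to see that $\tau_A$ is injective, apply the hypothesis to get bijectivity, and conclude $\tau_A \circ \tau_B = \Id$ and hence $BA = I_d$. Your explicit remark that the step $\tau_B = \tau_A^{-1}$ is purely set-theoretic (and does not require knowing in advance that $\tau_A^{-1}$ is a cellular automaton) is a nice touch that the paper leaves implicit.
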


\begin{proof}
Suppose that $A,B \in \Mat_d(K[M])$ satisfy $AB = I_d \in \Mat_d(K[M])$.
Consider the $K$-algebra anti-isomorphism
$\Psi \colon \Mat_d(K[M]) \to \LCA(M,K^d)$, $A \mapsto \tau_A$, introduced in Theorem~\ref{t:rep-d-dim-lca}.
We then have
\[
\tau_B \circ \tau_A = \Psi(B) \circ \Psi(A) = \Psi(AB) = \Psi(I_d) = \Id_{(K^d)^M}.
\]
This implies that  the linear cellular automaton $\tau_A \colon (K^d)^M \to (K^d)^M$ is injective.
By our hypothesis, $\tau_A$ is surjective and therefore bijective.
As $\tau_B \circ \tau_A = \Id_{(K^d)^M}$, the inverse bijection  of $\tau_A$ is $\tau_B$.
We deduce that $\tau_A$ is an invertible element in $\LCA(M,K^d)$ with inverse  $\tau_B$. 
It follows that $A = \Psi^{-1}(\tau_A)$ is invertible in $\Mat_d(K[M])$ with inverse $B = \Psi^{-1}(\tau_B)$.
Therefore, $B A = I_d$.
This shows that $\Mat_d(K[M])$ is directly finite.
\end{proof}

The following statements are immediate consequences of Theorem~\ref{t:lin-surj-d-dir-fin}.

\begin{corollary}
\label{c:K-lin-surj-stable-fin}
Let $M$ be a monoid and let $K$ be a field.
Suppose that $M$ is $K$-surjunctive.
Then the monoid algebra $K[M]$ is stably finite.
\qed
\end{corollary}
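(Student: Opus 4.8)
The plan is to read the corollary off from Theorem~\ref{t:lin-surj-d-dir-fin} by letting the matrix size vary. Recall that, by definition, a ring $R$ is stably finite exactly when $\Mat_d(R)$ is directly finite for every integer $d \geq 1$; so it suffices to establish this for $R = K[M]$ and each fixed $d$.

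Concretely, I would fix an arbitrary integer $d \geq 1$. Since $M$ is $K$-surjunctive, specializing the definition of $K$-surjunctivity to the particular finite-dimensional vector space $V = K^d$ shows that every injective linear cellular automaton $\tau \colon (K^d)^M \to (K^d)^M$ is surjective. This is precisely the hypothesis required by Theorem~\ref{t:lin-surj-d-dir-fin}, whose conclusion is then that the $K$-algebra $\Mat_d(K[M])$ is directly finite. As $d \geq 1$ was arbitrary, $\Mat_d(K[M])$ is directly finite for every $d$, which is exactly the assertion that $K[M]$ is stably finite.

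I do not anticipate a genuine obstacle here: all the substantive work has already been done, namely the anti-isomorphism $\Psi \colon \Mat_d(K[M]) \to \LCA(M,K^d)$ of Theorem~\ref{t:rep-d-dim-lca} and the transfer of (linear) surjunctivity to direct finiteness in Theorem~\ref{t:lin-surj-d-dir-fin}. The only points worth stating explicitly are the elementary reduction of stable finiteness to direct finiteness of all matrix rings and the harmless identification of an arbitrary $d$-dimensional $K$-vector space with $K^d$ through a choice of basis (cf.\ Corollary~\ref{c:rep-d-dim-lca}), so that the hypothesis of $K$-surjunctivity really does cover the spaces $K^d$ needed above.
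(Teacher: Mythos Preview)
Your proposal is correct and matches the paper's approach exactly: the corollary is marked with \qed\ in the paper because it is stated as an immediate consequence of Theorem~\ref{t:lin-surj-d-dir-fin}, and your argument spells out precisely that immediate deduction.
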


\begin{corollary}
\label{c:lin-surj-stable-fin-1}
Let $M$ be a linearly-surjunctive monoid. 
Then the monoid algebra $K[M]$ is stably finite for every field $K$.
\qed
\end{corollary}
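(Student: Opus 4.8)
The plan is to prove directly that $\Mat_d(K[M])$ is directly finite for every integer $d\geq 1$, which is by definition stable finiteness of $K[M]$. Fix $d$ and suppose $A,B\in\Mat_d(K[M])$ satisfy $AB=I_d$; set $S\coloneqq\supp(A)\cup\supp(B)\cup\{1_M\}$, a finite subset of $M$. Writing the entries of $A$ and $B$ as $K$-linear combinations of the elements of $S$ and collecting terms according to the finite partial multiplication table $S\times S\to M$, the equation $AB=I_d$ and the desired equation $BA=I_d$ each become a finite system of polynomial equations with coefficients $0,1$ in the $2d^2|S|$ scalars that are the coefficients of $A$ and $B$. Hence the assertion ``$\Mat_d(K[M])$ is directly finite'' is equivalent to: for every finite $S\subseteq M$ with $1_M\in S$, the field $K$ satisfies the sentence
\[
\psi_{M,S,d}\ :\ \forall\,(x_{ij,s})\,(y_{ij,t})\ \bigl(\ [\text{equations for }AB=I_d]\ \rightarrow\ [\text{equations for }BA=I_d]\ \bigr)
\]
in the first-order language of rings, where the monoid enters only through the finite combinatorial datum recording which products $st$, $s,t\in S$, coincide and which of them equals $1_M$. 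Crucially, $\psi_{M,S,d}$ is a \emph{universal} sentence.

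First I would dispose of the finite-field case. Since $M$ is surjunctive it is finitely linearly surjunctive, so for each finite field $\F_q$ every injective linear cellular automaton $(\F_q^{\,d})^M\to(\F_q^{\,d})^M$ is surjective; hence $\Mat_d(\F_q[M])$ is directly finite by Theorem~\ref{t:lin-surj-d-dir-fin}, i.e.\ $\F_q\models\psi_{M,S,d}$ for every finite $\F_q$, every finite $S\ni 1_M$, and every $d$. Next, fix a prime $p$: the algebraic closure $\overline{\F_p}=\bigcup_n\F_{p^n}$ is an increasing union of fields each satisfying the universal sentence $\psi_{M,S,d}$, and since quantifier-free formulas are absolute between a structure and a substructure, universal sentences pass to unions of chains; therefore $\overline{\F_p}\models\psi_{M,S,d}$. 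By the First Lefschetz principle (Theorem~\ref{t:el-equiv-fields}), every algebraically closed field of characteristic $p$ satisfies $\psi_{M,S,d}$.

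For characteristic $0$ I would argue by contradiction: if some algebraically closed field of characteristic $0$ failed $\psi_{M,S,d}$, it would satisfy $\neg\psi_{M,S,d}$, and the Second Lefschetz principle (Theorem~\ref{t:char-p-char-0}) would then force $\neg\psi_{M,S,d}$ to hold in every algebraically closed field of all sufficiently large characteristic $p$, contradicting the previous step. So $\psi_{M,S,d}$ holds in every algebraically closed field, of every characteristic. Finally, for an arbitrary field $K$, its algebraic closure $\overline K$ has the same characteristic and hence satisfies $\psi_{M,S,d}$; as $\psi_{M,S,d}$ is universal and $K$ is a subring of $\overline K$, we conclude $K\models\psi_{M,S,d}$. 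Letting $S$ and $d$ vary yields direct finiteness of $\Mat_d(K[M])$ for all $d\geq 1$, i.e.\ $K[M]$ is stably finite.

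The hard part is really the bookkeeping of the first step: one must package a statement about the (in general infinite-dimensional) algebra $\Mat_d(K[M])$ into a single first-order sentence \emph{about the field $K$}, which is legitimate precisely because any potential failure of direct finiteness is supported on a finite subset of $M$ and the monoid multiplication intervenes only through a finite table. After that, the only thing to watch is the direction of the model-theoretic preservation properties --- universal sentences descend to subrings and survive unions of chains --- which is exactly what carries us from finite fields up to $\overline{\F_p}$ and, through the two Lefschetz principles, back down to every field $K$.
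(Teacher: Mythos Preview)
Your argument is correct in substance, but it takes a much longer route than the paper does, and there is one terminological slip to flag.

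The paper's proof is a single sentence: since $M$ is \emph{linearly} surjunctive, by definition $M$ is $K$-surjunctive for every field $K$, and then Corollary~\ref{c:K-lin-surj-stable-fin} (equivalently, Theorem~\ref{t:lin-surj-d-dir-fin} applied for each $d$) immediately gives that $K[M]$ is stably finite. No model theory is needed, because the hypothesis already covers arbitrary fields.

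What you have done instead is throw away most of the hypothesis --- you only use that $M$ is \emph{finitely} linearly surjunctive --- and then recover the general field case via the first-order encoding and the two Lefschetz principles. That is exactly the content of Theorem~\ref{t:df-L-finite-implies-df} and Theorem~\ref{t:fin-lin-implies-sf} in the paper, which are genuinely harder statements. So your write-up is not so much a proof of Corollary~\ref{c:lin-surj-stable-fin-1} as a proof of the stronger Theorem~\ref{t:fin-lin-implies-sf}, from which the corollary follows a fortiori. This is fine mathematically, but it obscures the fact that Corollary~\ref{c:lin-surj-stable-fin-1} is trivial once Theorem~\ref{t:lin-surj-d-dir-fin} is in hand.

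One slip: you write ``Since $M$ is surjunctive it is finitely linearly surjunctive''. The hypothesis here is that $M$ is \emph{linearly} surjunctive, not surjunctive; these are distinct notions in the paper. The implication you actually need (linearly surjunctive $\Rightarrow$ finitely linearly surjunctive) is immediate from the definitions, so the deduction survives, but the sentence as written invokes the wrong hypothesis.
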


\begin{corollary}
\label{c:lin-surj-stable-fin}
Let $M$ be a finitely-linearly-surjunctive monoid. 
Then the monoid algebra $K[M]$ is stably finite for every finite field $K$.
\qed
\end{corollary}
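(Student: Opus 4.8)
The plan is to obtain the corollary by unwinding the definition of finite-linear surjunctivity and feeding the resulting hypothesis into Theorem~\ref{t:lin-surj-d-dir-fin}. Fix a finite field $K$, and recall that, by assumption, $M$ is $K'$-surjunctive for \emph{every} finite field $K'$; applying this with $K' = K$ shows that $M$ is itself $K$-surjunctive. By the definition of $K$-surjunctivity, this means that for every finite-dimensional $K$-vector space $V$, every injective linear cellular automaton $\tau \colon V^M \to V^M$ is surjective.

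Next I would specialize this to the spaces $V = K^d$. For each integer $d \geq 1$ we thereby know that every injective linear cellular automaton $\tau \colon (K^d)^M \to (K^d)^M$ is surjective, which is exactly the hypothesis required by Theorem~\ref{t:lin-surj-d-dir-fin}. Invoking that theorem once for each $d$ yields that the matrix algebra $\Mat_d(K[M])$ is directly finite for every $d \geq 1$. By the definition of stable finiteness recalled in the Introduction --- namely that a ring $R$ is stably finite precisely when $\Mat_d(R)$ is directly finite for all $d \geq 1$ --- this says exactly that $K[M]$ is stably finite. Since $K$ was an arbitrary finite field, the conclusion holds for every finite field, as claimed. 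Equivalently, one may simply observe that $K$-surjunctivity of $M$ triggers Corollary~\ref{c:K-lin-surj-stable-fin} directly.

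I do not anticipate a genuine obstacle here: the statement is a bookkeeping consequence of Theorem~\ref{t:lin-surj-d-dir-fin} together with the relevant definitions. The only points meriting care are (i) matching the finite-field hypothesis built into ``finitely-linearly surjunctive'' to the finite-field conclusion, so that the specialization $K' = K$ is legitimate, and (ii) checking that letting $d$ range over all positive integers in the choice $V = K^d$ captures precisely the ``for every $d \geq 1$'' clause in the definition of stable finiteness, with no relevant finite-dimensional $K$-vector space left unaccounted for (which is immediate, since choosing a basis identifies any $d$-dimensional $V$ with $K^d$).
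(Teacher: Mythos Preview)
Your proposal is correct and matches the paper's approach: the paper states this corollary with a \qed symbol, treating it as an immediate consequence of Theorem~\ref{t:lin-surj-d-dir-fin} (equivalently, of Corollary~\ref{c:K-lin-surj-stable-fin}), which is exactly what you do after unpacking the definition of finite-linear surjunctivity.
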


\begin{corollary}
\label{c:lin-surj-stable-fin-2}
Let $M$ be a surjunctive monoid. 
Then the monoid algebra $K[M]$ is stably finite for every finite field $K$.
\qed
\end{corollary}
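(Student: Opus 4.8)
The plan is to combine the observation recorded just before Theorem~\ref{t:lin-surj-d-dir-fin} with Corollary~\ref{c:lin-surj-stable-fin}. First I would recall why every surjunctive monoid is finitely-linearly surjunctive: if $K$ is a finite field and $V$ is a finite-dimensional $K$-vector space, then $V$ is a finite set, so any linear cellular automaton $\tau \colon V^M \to V^M$ is in particular a cellular automaton over the finite alphabet $V$. If such a $\tau$ is injective, surjunctivity of $M$ forces it to be surjective. Hence $M$ is $K$-surjunctive for every finite field $K$, i.e., $M$ is finitely-linearly surjunctive. Corollary~\ref{c:lin-surj-stable-fin} then applies verbatim and yields that $K[M]$ is stably finite for every finite field $K$.

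Alternatively, I would bypass the intermediate corollaries and argue directly, making the dependence on Theorem~\ref{t:rep-d-dim-lca} explicit. Fix a finite field $K$ and an integer $d \geq 1$, and suppose $A, B \in \Mat_d(K[M])$ satisfy $AB = I_d$. Applying the $K$-algebra anti-isomorphism $\Psi \colon \Mat_d(K[M]) \to \LCA(M,K^d)$ of Theorem~\ref{t:rep-d-dim-lca}, we get $\tau_B \circ \tau_A = \Psi(AB) = \Psi(I_d) = \Id_{(K^d)^M}$, so the linear cellular automaton $\tau_A$ over the finite alphabet $K^d$ is injective. Surjunctivity of $M$ makes $\tau_A$ bijective with inverse $\tau_B$, whence $\Psi(BA) = \tau_A \circ \tau_B = \Id_{(K^d)^M}$ and therefore $BA = I_d$. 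Thus $\Mat_d(K[M])$ is directly finite for every $d \geq 1$, which is exactly stable finiteness of $K[M]$.

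There is no genuine obstacle here: the statement is an immediate corollary, and either route above is a short chain of already-established facts. The single point worth flagging is that the reduction from a linear cellular automaton over the $K$-vector space $K^d$ to an ordinary cellular automaton over the finite alphabet $K^d$ is legitimate precisely because $K$ is finite; over an infinite field this step collapses, which is why upgrading to arbitrary fields (Theorem~\ref{t:main}) needs the model-theoretic input — the Lefschetz principles of Theorems~\ref{t:el-equiv-fields} and~\ref{t:char-p-char-0} — developed in the following section, rather than surjunctivity alone.
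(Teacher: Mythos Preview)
Your proposal is correct and matches the paper's approach exactly: the corollary is stated there without proof as an immediate consequence, relying on the observation (made just before Theorem~\ref{t:lin-surj-d-dir-fin}) that every surjunctive monoid is finitely-linearly surjunctive, together with Corollary~\ref{c:lin-surj-stable-fin}. Your alternative direct route simply unpacks the proof of Theorem~\ref{t:lin-surj-d-dir-fin} in this special case, which is also fine.
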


\section{Proof of the main result}
\label{s:proof-main-result}

In this section, we prove Theorem~\ref{t:main} and some other related results.

\begin{theorem}
\label{t:sf-KM-elem-equiv}
Let $M$ be a monoid,  let $d \geq 1$ be an integer,
and suppose that $K$ and $L$ are elementary equivalent fields.
Then  $\Mat_d(K[M])$ is directly finite if and only if  $\Mat_d(L[M])$ is directly finite.
\end{theorem}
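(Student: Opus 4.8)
The plan is to reduce direct finiteness of $\Mat_d(K[M])$ to a (possibly infinite) family of first-order sentences in the language of rings, and then invoke elementary equivalence of $K$ and $L$. Fix an integer $d \geq 1$. For a finite subset $S \subset M$, observe that the sub-$K$-vector space of $\Mat_d(K[M])$ consisting of matrices with support contained in $S$ is spanned by the finite basis $\{E_{ij} s : 1 \leq i,j \leq d, \ s \in S\}$, where $E_{ij}$ are the matrix units; hence every such matrix is coordinatized by $d^2|S|$ scalars in $K$. The product of a matrix supported on $S$ and a matrix supported on $T$ is supported on the finite set $ST \subset M$, and each coordinate of the product, with respect to the basis indexed by $ST$, is a fixed $K$-bilinear expression in the coordinates of the two factors — the structure constants of this bilinear map depend only on $M$, $d$, $S$, $T$ (specifically on which products $s t$ with $s \in S$, $t \in T$ equal a given element of $ST$), not on the field. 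Likewise, the identity matrix $I_d$ has fixed coordinates ($0$ and $1$) with respect to any basis indexed by a finite set containing $1_M$.

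Next I would write down, for each pair of finite subsets $S, T \subset M$ with $1_M \in ST$, a first-order sentence $\psi_{S,T}$ over the language of rings asserting: ``for all tuples $\mathbf{x}$ of $d^2|S|$ variables and $\mathbf{y}$ of $d^2|T|$ variables, if the matrix $A$ with coordinates $\mathbf{x}$ (supported on $S$) and the matrix $B$ with coordinates $\mathbf{y}$ (supported on $T$) satisfy $AB = I_d$, then $BA = I_d$.'' Here $AB = I_d$ and $BA = I_d$ unwind into finitely many polynomial equations among $\mathbf{x}, \mathbf{y}$ with integer coefficients (coming from the structure constants described above, together with the coefficients of $I_d$), so $\psi_{S,T}$ is a genuine sentence in $\mathcal{L} = (+,-,\times,0,1)$. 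The point is now: $\Mat_d(K[M])$ is directly finite if and only if $K \models \psi_{S,T}$ for every such pair $(S,T)$. Indeed, if $AB = I_d$ in $\Mat_d(K[M])$, then taking $S := \supp(A)$ and $T := \supp(B)$ (and noting $1_M \in \supp(AB) \cdot \{1_M\} \subseteq ST$ up to harmless enlargement of $S, T$ to contain $1_M$), the pair $(A, B)$ witnesses the hypothesis of $\psi_{S,T}$, and $\psi_{S,T}$ forces $BA = I_d$; conversely direct finiteness trivially implies each $\psi_{S,T}$. Since $K$ and $L$ are elementary equivalent, $K \models \psi_{S,T} \iff L \models \psi_{S,T}$ for every $(S,T)$, and the equivalence follows.

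The main obstacle is purely bookkeeping: making precise that the ``structure constants'' governing the bilinear product map and the coordinates of $I_d$ are field-independent integers, so that $\psi_{S,T}$ really is a single first-order sentence over $\mathcal{L}$ (with no parameters from $K$). This requires care in indexing — one must handle the fact that $st$ for $s \in S, t \in T$ may produce coincidences and may or may not hit $1_M$, and one must enlarge $S$ and $T$ to contain $1_M$ at the outset so that $I_d$ is expressible in the chosen coordinates. None of this is deep, but it is the step where the argument must be written carefully. An alternative, slicker route avoiding explicit sentences altogether would be to use Corollary~\ref{c:rep-d-dim-lca} and argue via linear cellular automata with a fixed finite memory set, but the model-theoretic core — encoding the finitely-supported matrix arithmetic into first-order logic over $\mathcal{L}$ — is the same, so I would present the direct coordinate version above.
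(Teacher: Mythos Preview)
Your proposal is correct and takes essentially the same approach as the paper: encode the (failure of) direct finiteness of $\Mat_d(K[M])$ for matrices with bounded support as a first-order sentence in the language of rings, then transfer via elementary equivalence. The only cosmetic differences are that the paper uses a single finite set $S$ for the supports of both $A$ and $B$ and phrases the sentence existentially (asserting the existence of a counterexample) rather than universally, which slightly streamlines the bookkeeping you flag as the main obstacle.
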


\begin{proof}
We first  claim that, given a finite subset $S \subset M$,
there exists a sentence $\psi_{S}$ in the language of rings such that a field $K$ satisfies 
$\psi_{S}$ if and only if there exist two matrices $A,B \in \Mat_d(K[M])$ such that
\begin{enumerate}[\rm (1)]
\item
the support of each entry of $A$ and of each entry of $B$ is contained in $S$;
\item
$A B = I_d$ and $B A \not= I_d$.
\end{enumerate} 
Indeed, since $S$ is fixed, we can quantify over  matrices in $\Mat_d(K[M])$ whose support of each entry is contained in $S$ by quantifying over the coefficients of every entry of the matrix. Consequently, the existence of two matrices $A,B \in \Mat_d(K[M])$ satisfying (1) and  (2) can be expressed by a $2d^2|S|$-variables first-order sentence $\psi_{S}$ in the language of rings, depending
only on the multiplication table (in the monoid $M$) of the elements in $S$. 
\par 
For the sake of completeness, we give below an explicit formula for $\psi_{S}$. 
We represent the entries at the position $(i,j)$ of the matrices $A$ and $B$  by 
$\displaystyle \sum_{s \in S} x_{i, j,s} s$ and $\displaystyle \sum_{s \in S} y_{i, j,s} s$, respectively. 
For $ 1 \leq i,j \leq d$ and $m \in S^2 = \{st: s, t \in S\} \subset M$, set 
\[
P(X,Y)_{i,j,m}\coloneqq \sum_{k = 1}^d \sum_{\substack{s,t \,\in S\\ st= m}} x_{i,k,s} y_{k,j,t}.
\] 
Let $D \coloneqq \{(i,i,1_M) : 1 \leq i \leq d\}$. 
Then the property $AB=I_d$ (resp.\ $BA=I_d$) can be expressed by the first-order formula
$P(X,Y)$ (resp.\ $P(Y,X)$), where 
\[
P(X,Y)= \left(\bigwedge\limits_{\substack{ (i,i,1_M) \in D}} P(X,Y)_{i,i,1_M}=1 \right)
\land   \left(\bigwedge\limits_{\substack{m \in S^2, (i,j,m) \notin D}} P(X,Y)_{i,j,m}=0\right).
\]
Hence, we can take 
\begin{equation}
\label{e_psi-M}
\psi_{S} \coloneqq  \exists X = (x_{i,j,s})_{\substack{1 \leq i,j \leq d\\s \in S}}, \ \exists Y=(y_{i,j,s})_{\substack{1 \leq i,j \leq d\\s \in S}}: \ P(X,Y) \land \neg P(Y,X).
\end{equation} 
\par
As $K$ and $L$ play symmetric roles, it suffices to show that if $\Mat_d(K[M])$ is not directly finite then $\Mat_d(L[M])$ is not directly finite.
So, let us assume that $\Mat_d(K[M])$ is not directly finite.
This means that there exist  two square matrices $A$ and $B$ of order $d$ with entries in $K[M]$ such that
$A B = I_d$ and $B A \not= I_d$. If $S \subset M$ is a finite subset containing the support of each entry of $A$ and $B$,
the field $K$ satisfies the sentence $\psi_{S}$ given by the claim.
Since $K$ and $L$ are elementary equivalent by our hypothesis, the sentence $\psi_{S}$ is also satisfied by the field $L$.
We deduce that  $\Mat_d(L[M])$ is not directly finite either.
\end{proof}

As an immediate consequence of Theorem~\ref{t:sf-KM-elem-equiv}, we obtain the following result.

\begin{corollary}
\label{c:sf-KM-elem-equiv}
Let $M$ be a monoid and 
suppose that $K$ and $L$ are elementary equivalent fields.
Then  $K[M]$ is stably finite if and only if  $L[M]$ is stably finite.
\qed
\end{corollary}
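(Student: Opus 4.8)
The plan is to read the corollary off from Theorem~\ref{t:sf-KM-elem-equiv} by unwinding the definition of stable finiteness. Recall that a ring $R$ is stably finite exactly when $\Mat_d(R)$ is directly finite for every integer $d \geq 1$. Taking $R = K[M]$ and $R = L[M]$, the statement to be proved becomes equivalent to the assertion that, for every $d \geq 1$, the matrix ring $\Mat_d(K[M])$ is directly finite if and only if $\Mat_d(L[M])$ is directly finite.

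This last assertion is precisely what Theorem~\ref{t:sf-KM-elem-equiv} supplies, one value of $d$ at a time, under the hypothesis that $K$ and $L$ are elementary equivalent. So the proof reduces to a routine quantifier manipulation: assume $K[M]$ is stably finite; fix an arbitrary $d \geq 1$; then $\Mat_d(K[M])$ is directly finite, hence by Theorem~\ref{t:sf-KM-elem-equiv} so is $\Mat_d(L[M])$; since $d$ was arbitrary, $L[M]$ is stably finite. The converse implication follows by interchanging the roles of $K$ and $L$, which is legitimate because elementary equivalence of fields is a symmetric relation.

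There is no genuine obstacle at this stage: all of the substance is carried by Theorem~\ref{t:sf-KM-elem-equiv}, whose proof encodes, for a fixed finite subset $S \subset M$, the existence of matrices $A, B \in \Mat_d(K[M])$ with all entries supported on $S$ and satisfying $AB = I_d$ and $BA \neq I_d$ as a single first-order sentence $\psi_S$ in the language of rings, built from $2d^2|S|$ variables (the coefficients of the entries) and depending only on the multiplication table of $S$ inside $M$. The one point to state carefully in the write-up is that $d$ ranges over all positive integers, so one applies Theorem~\ref{t:sf-KM-elem-equiv} separately for each $d$ and then takes the conjunction of the resulting equivalences over all $d$; no uniformity in $d$ is needed or claimed.
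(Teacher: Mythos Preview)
Your proposal is correct and matches the paper's approach exactly: the paper simply records the corollary as ``an immediate consequence of Theorem~\ref{t:sf-KM-elem-equiv}'' and gives no further argument, and what you have written is precisely the routine unwinding of the definition of stable finiteness that justifies this.
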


\begin{theorem}
\label{t:df-L-finite-implies-df}
Let $M$ be a monoid and let $d \geq 1$ be an integer.
Suppose that $\Mat_d(L[M])$ is directly finite for every finite field $L$.
Then $\Mat_d(K[M])$ is directly finite for every  field $K$.
\end{theorem}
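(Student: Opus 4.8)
The plan is to transfer, by way of the two Lefschetz principles together with Theorem~\ref{t:sf-KM-elem-equiv}, from an arbitrary field $K$ down to finite fields. First I would reduce to the case where $K$ is algebraically closed: since $K[M]$ is a subring of $\overline{K}[M]$ containing the same identity, $\Mat_d(K[M])$ is a subring of $\Mat_d(\overline{K}[M])$ with the same identity matrix, and the class of directly finite rings is closed under taking subrings; hence it suffices to prove that $\Mat_d(\overline{K}[M])$ is directly finite, i.e.\ we may assume $K$ algebraically closed.

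Next I would dispose of the positive characteristic case. Suppose $K$ is algebraically closed of characteristic $p>0$. Then $\overline{\mathbb{F}_p}$ is the directed union of its finite subfields $\mathbb{F}_{p^n}$, so $\overline{\mathbb{F}_p}[M] = \varinjlim_n \mathbb{F}_{p^n}[M]$ and therefore $\Mat_d(\overline{\mathbb{F}_p}[M]) = \varinjlim_n \Mat_d(\mathbb{F}_{p^n}[M])$ is a direct limit of directly finite rings (by hypothesis), hence directly finite, the class of directly finite rings being closed under direct limits. Since $K$ and $\overline{\mathbb{F}_p}$ are two algebraically closed fields of the same characteristic, they are elementarily equivalent by the First Lefschetz principle (Theorem~\ref{t:el-equiv-fields}), so $\Mat_d(K[M])$ is directly finite by Theorem~\ref{t:sf-KM-elem-equiv}.

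Finally, the characteristic $0$ case, which is where the Second Lefschetz principle enters. Assume for contradiction that $K$ is algebraically closed of characteristic $0$ and $\Mat_d(K[M])$ is not directly finite, witnessed by matrices $A,B \in \Mat_d(K[M])$ with $AB = I_d$ and $BA \neq I_d$. Choosing a finite subset $S \subset M$ containing the support of every entry of $A$ and of $B$, the field $K$ satisfies the sentence $\psi_S$ built in the proof of Theorem~\ref{t:sf-KM-elem-equiv}. By the Second Lefschetz principle (Theorem~\ref{t:char-p-char-0}) there is an integer $N$ such that $\psi_S$ holds in every algebraically closed field of characteristic $p \geq N$; fixing a prime $p \geq N$, the field $\overline{\mathbb{F}_p}$ satisfies $\psi_S$, which means precisely that $\Mat_d(\overline{\mathbb{F}_p}[M])$ is not directly finite — contradicting the previous paragraph. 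This finishes the proof.

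The only genuinely delicate point is this last transfer: one must be careful that $\psi_S$ is a single first-order sentence in the language of rings depending only on $d$, on $S$, and on the multiplication table of $S$ (hence independent of the field), so that its truth can legitimately be moved between fields of different characteristics — this is exactly what was verified in the proof of Theorem~\ref{t:sf-KM-elem-equiv}. Everything else is an assembly of the closure properties of directly finite rings (subrings, direct limits) with the First Lefschetz principle.
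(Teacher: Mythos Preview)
Your proof is correct and follows essentially the same route as the paper's: reduce to the algebraic closure via closure under subrings, handle $\overline{\mathbb{F}_p}$ by writing it as a union (direct limit) of finite fields, transfer to arbitrary algebraically closed fields of positive characteristic via the First Lefschetz principle and Theorem~\ref{t:sf-KM-elem-equiv}, and finally treat characteristic~$0$ by contradiction using $\psi_S$ and the Second Lefschetz principle. The only cosmetic differences are the ordering of the cases and that the paper argues Case~1 by an explicit element chase in the tower $K_n$ rather than invoking the direct-limit closure property, but these are the same argument.
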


\begin{proof}
Let $K$ be a field.
We want to show that $\Mat_d(K[M])$ is directly finite.
We divide the proof into four cases.
\par
{\bf Case 1:} $K$ is the algebraic closure of the finite field $F_p \coloneqq \Z/p\Z$ for some prime $p$.
For every integer $n \geq 1$, let $K_n$ denote the subfield of $K$ consisting of all roots of the polynomial $X^{p^{n!}} - X$.
In other words, denoting by $\phi \colon K \to K$ the Frobenius automorphism, $K_n$ is the subfield of $K$ consisting of all 
fixed points of $\phi^{n!}$. 
We have $K_n \subset K_{n + 1}$ for all $n \geq 1$ and $K = \bigcup_{n \geq 1}  K_n$.  
Moreover,  $K_n$ is a finite field (of cardinality $p^{n!}$) for every $n \geq 1$.
Let $A$ and $B$ be square matrices of order $d$ with entries in $K[M]$ such that $A B = I_d$.
Then there exists $n_0 \geq 1$ such that all entries of $A$ and $B$ are in $K_{n_0}[M]$.
Since $\Mat_d(K_{n_0}[M])$ is directly finite by our hypothesis, we deduce that  $B A = I_d$.
This shows that $\Mat_d(K[M])$ is directly  finite.
\par
{\bf Case 2:} $K$ is an algebraically closed field of characteristic $p > 0$.
This follows from Case 1, Theorem~\ref{t:sf-KM-elem-equiv}, and the first Lefschetz principle (Theorem~\ref{t:el-equiv-fields}).
\par
{\bf Case 3:} $K$ is an  algebraically closed field of characteristic $0$.
Suppose by contradiction that $\Mat_d(K[M])$ is not directly finite.
This means that $K$ satisfies the first-order sentence $\psi_{S}$ in~\eqref{e_psi-M} for  some finite subset $S \subset M$.
By applying the second Lefschetz principle (Theorem~\ref{t:char-p-char-0}), we deduce that there exists an integer $N \geq 1$ such that
$\psi_{S}$ is satisfied by any algebraically closed field $L$ of characteristic $p \geq N$.
This implies that for such a field $L$, the $L$-algebra $\Mat_d(L[M])$ is not directly finite, in contradiction with Case 2.
\par
{\bf Case 4:} $K$ is an arbitrary field.
Let $\overline{K}$ denote the algebraic closure of $K$.
Then $\Mat_d(\overline{K}[M])$ is directly finite by Cases 2 and 3.
As $\Mat_d(K[M])$ is a subring of $\Mat_d(\overline{K}[M])$, we deduce that $\Mat_d(K[M])$ is itself stably finite.
\end{proof}

The following result is an immediate consequence of Theorem~\ref{t:df-L-finite-implies-df}.

\begin{corollary}
\label{c:df-L-finite-implies-df-1}
Let $M$ be a monoid.
Suppose that $L[M]$ is stably finite for every finite field $L$.
Then $K[M]$ is stably finite for every  field $K$.
\qed
\end{corollary}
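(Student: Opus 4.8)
The plan is to unwind the definition of stable finiteness and then invoke Theorem~\ref{t:df-L-finite-implies-df} one matrix size at a time. Recall that a ring $R$ is stably finite precisely when $\Mat_d(R)$ is directly finite for every integer $d \geq 1$. Hence the hypothesis that $L[M]$ is stably finite for every finite field $L$ is equivalent to the assertion that $\Mat_d(L[M])$ is directly finite for every integer $d \geq 1$ and every finite field $L$.

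Now I would fix an arbitrary field $K$ and an arbitrary integer $d \geq 1$. By the reformulation just made, $\Mat_d(L[M])$ is directly finite for every finite field $L$, so the hypothesis of Theorem~\ref{t:df-L-finite-implies-df} holds for this value of $d$. Applying that theorem gives that $\Mat_d(K[M])$ is directly finite. Since $d \geq 1$ was arbitrary, $\Mat_d(K[M])$ is directly finite for all $d \geq 1$, which is exactly the statement that $K[M]$ is stably finite; and since $K$ was an arbitrary field, the conclusion follows.

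There is no real obstacle here, as the entire content of the argument is packaged inside Theorem~\ref{t:df-L-finite-implies-df} (and, through its proof, inside the two Lefschetz principles and the case split on the characteristic of $K$). The only point requiring a modicum of care is that Theorem~\ref{t:df-L-finite-implies-df} is stated for a \emph{fixed} $d$, so it must be applied separately for each matrix size; the outer quantifier over $d$ is dealt with in the passage from ``directly finite for all $\Mat_d$'' to ``stably finite''.
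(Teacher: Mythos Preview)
Your proof is correct and is exactly the argument the paper has in mind: the corollary is stated as an immediate consequence of Theorem~\ref{t:df-L-finite-implies-df}, and your unwinding of ``stably finite'' into direct finiteness of $\Mat_d$ for each $d$, followed by an application of that theorem at each fixed $d$, is precisely how this immediate consequence is obtained.
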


\begin{theorem}
\label{t:fin-lin-implies-sf}
Let $M$ be a finitely-linearly surjunctive monoid and let $K$ be a field.
Then the monoid algebra $K[M]$ is stably finite.
\end{theorem}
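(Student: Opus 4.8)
The plan is to obtain Theorem~\ref{t:fin-lin-implies-sf} by composing two facts already established: the identification of (finitely-)linear surjunctivity with direct finiteness of matrix monoid algebras, and the characteristic-transfer principle proved in this section. There is essentially no new content to produce; the work is bookkeeping.

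First I would unwind the definitions. By hypothesis $M$ is $L$-surjunctive for every finite field $L$, that is, for every finite field $L$ and every finite-dimensional $L$-vector space $V$, every injective linear cellular automaton $\tau \colon V^M \to V^M$ is surjective. Applying this with $V = L^d$ for each integer $d \geq 1$ and invoking Theorem~\ref{t:lin-surj-d-dir-fin} (whose proof rests on the anti-isomorphism $\Psi \colon \Mat_d(L[M]) \to \LCA(M,L^d)$ of Theorem~\ref{t:rep-d-dim-lca}), I get that $\Mat_d(L[M])$ is directly finite for every $d \geq 1$ and every finite field $L$; equivalently, by Corollary~\ref{c:lin-surj-stable-fin}, $L[M]$ is stably finite for every finite field $L$. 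Then I would push this from finite ground fields to an arbitrary field $K$ by Corollary~\ref{c:df-L-finite-implies-df-1} (equivalently, Theorem~\ref{t:df-L-finite-implies-df} applied for each fixed $d$): direct finiteness of $\Mat_d(L[M])$ for all finite $L$ forces direct finiteness of $\Mat_d(K[M])$ for every field $K$, and since $d$ was arbitrary this is precisely stable finiteness of $K[M]$. That finishes the proof.

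I do not expect any genuine obstacle here: the statement is a corollary of Section~\ref{s:lca} together with the earlier results of the present section. The only substantive ingredient is the model-theoretic transfer hidden inside Theorem~\ref{t:df-L-finite-implies-df}, which reduces the positive-characteristic case to the exhaustion of $\overline{\F_p}$ by its finite subfields combined with the first Lefschetz principle (Theorem~\ref{t:el-equiv-fields}), and handles characteristic zero via the second Lefschetz principle (Theorem~\ref{t:char-p-char-0}) applied to the explicit sentence $\psi_S$ of~\eqref{e_psi-M}. Since that argument has already been carried out, at this point it may simply be cited.
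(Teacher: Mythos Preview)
Your proposal is correct and follows exactly the same route as the paper: invoke Corollary~\ref{c:lin-surj-stable-fin} to get stable finiteness of $L[M]$ for every finite field $L$, then apply Corollary~\ref{c:df-L-finite-implies-df-1} to pass to an arbitrary field $K$. The additional unpacking you give (of Theorem~\ref{t:lin-surj-d-dir-fin} and of the model-theoretic content behind Theorem~\ref{t:df-L-finite-implies-df}) is accurate but not needed, since those results have already been proved and can simply be cited.
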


\begin{proof}
Since $M$ is finitely-linearly surjunctive, $L[M]$ is stably finite for every finite field $L$
by Corollary~\ref{c:lin-surj-stable-fin}.
Therefore, $K[M]$ is stably finite  by Corollary~\ref{c:df-L-finite-implies-df-1}. 
\end{proof}

\begin{proof}[Proof of Theorem~\ref{t:main}]
Since every surjunctive monoid is finitely-linearly surjunctive,
Theorem~\ref{t:main} is a  consequence of Theorem~\ref{t:fin-lin-implies-sf}.
\end{proof}

\section{Some open problems}

To the open questions related to surjunctivity of monoids listed at the end of~\cite{csc-surj-monoids}, we add the following ones. 

\begin{enumerate}[(Q1)]
\item
Let $M$ be a cancellative monoid and let $K$ be a field.
Is $K[M]$ stably finite?
\item
(cf.~\cite{steinberg2022stable})
Let $M$ be a monoid containing no submonoid isomorphic to the bicyclic monoid and let $K$ be a field.
Is $K[M]$ stably finite?
\item
Do there exist a monoid $M$ and a field $K$ such that
$K[M]$ is directly finite but not stably finite?
\item
Given an integer $d \geq 1$,
do there exist a monoid $M$ and a field $K$ such that
$\Mat_d(K[M])$ is directly finite but $\Mat_{d + 1}(K[M])$ is not?
\end{enumerate}

Observe that a positive answer to (Q2) would yield a positive answer to (Q1) and 
that a positive answer to (Q1) would yield  Kaplansky's stable finiteness conjecture for groups.
Note also that a positive answer to (Q4) would yield a positive answer to (Q3).

\bibliographystyle{siam}

\end{document}